\documentclass[11pt,a4paper,reqno]{amsart}
\usepackage[utf8]{inputenc}
\usepackage[T1]{fontenc}
\usepackage{bbm}
\usepackage{amsaddr}
\usepackage{amsmath}
\usepackage{amsthm}
\usepackage{amsfonts}
\usepackage{amssymb}
\usepackage{graphicx}
\usepackage{subfig}
\usepackage{xcolor}
\usepackage{mathtools}
\usepackage{mathrsfs}
\usepackage{bm}
\usepackage{enumitem}
\usepackage[normalem]{ulem}
\usepackage[pdftex,
colorlinks=true,
linkcolor=blue,
citecolor=blue
]{hyperref}


\newtheorem{thm}{Theorem}[section]
\newtheorem{prop}[thm]{Proposition}
\newtheorem{lem}[thm]{Lemma}
\newtheorem{cor}[thm]{Corollary}
\newtheorem{definition}[thm]{Definition}
\newtheorem{rem}{Remark}
\newtheorem{assumption}{Assumption}


\everymath{\displaystyle}
\providecommand{\abs}[1]{{\left| #1 \right|}}
\providecommand{\norme}[1]{{\left\lVert #1 \right\rVert}}
\providecommand{\normeinf}[1]{{\norme{#1}}_{\infty}}

\def\N{\mathbb{N}}
\def\R{\mathbb{R}}
\newcommand{\Ci}[2]{\mathscr{C}^{#1}{\left(#2\right)}}
\newcommand{\Linf}[1]{L^\infty{\left(#1\right)}}
\newcommand{\Lq}[2]{L^{#1}{\left(#2\right)}}
\newcommand{\normeLq}[3]{{{\norme{#3}}_{\Lq{#1}{#2}}}}

\newcommand{\umean}{e^{\int_0^t \overline{u}(\tau)\, d\tau} }
\newcommand{\umeaninverse}{e^{-\int_0^t \overline{u}(\tau)\, d\tau} }
\newcommand{\vdroit}{\mathsf{v}}
\newcommand{\ftest}{\mathscr{C}^1_{\rm c}(0,T)}

\makeatletter
 \def\@textbottom{\vskip \z@ \@plus 1pt}
 \let\@texttop\relax
\makeatother


\title[Evolutionary branching]{Evolutionary branching via replicator-mutator equations}
\keywords{Evolutionary genetics, dynamics of adaptation, branching phenomena, long time behaviour,  Schrödinger eigenelements}
\subjclass[2010]{92B05, 92D15, 35K15, 45K05}


\author{Matthieu Alfaro {\tiny and} Mario Veruete}
\address{IMAG, Université de  Montpellier, CC051, 34095, Montpellier, France}
\email{matthieu.alfaro@umontpellier.fr, mario.veruete@umontpellier.fr}



\begin{document}
\begin{abstract}
We consider a class of non-local reaction-diffusion problems, referred to as {\it replicator-mutator} equations in evolutionary genetics. For a confining fitness function, we prove well-posedness and write the solution explicitly, via some underlying Schrödinger spectral elements (for which we provide new and non-standard estimates). As a consequence, the long time behaviour is determined by the principal eigenfunction or {\it ground state}. Based on this, we discuss (rigorously and via numerical explorations) the conditions on the fitness function and the mutation rate for  {\it evolutionary branching} to occur.
\end{abstract}

\maketitle

\section{Introduction}\label{s:intro}

In this paper we  first study the existence, uniqueness and long time behaviour of solutions $u=u(t,x)$, $t>0$, $x\in \R$, to the integro-differential Cauchy problem
 \begin{equation}\label{eq}
\left\{\begin{array}{ll}
\frac{\partial u}{\partial t} = \sigma^2\, \frac{\partial^2 u}{\partial x^2}+u \left( \mathcal{W}(x)-\int_\R \mathcal{W}(y)u(t,y)\, dy\right),\quad t>0,\, x\in \R, \vspace{5pt}\\
u(0,x)   = u_0(x),
\end{array}
\right.
\end{equation}
which serves as a model for the dynamics of adaptation, and where $\mathcal W$ is a confining fitness function (see below for details).  Next, we enquire on the possibility, depending on the function $\mathcal W$ and the parameter $\sigma>0$, for a solution to split from {\it uni-modal} to 
{\it multi-modal} shape, thus reproducing  {\it evolutionary branching}.

The above equation is referred to as a \emph{replicator-mutator} model. This type of model has found applications in different fields such as economics and biology \cite{PhysRevLett.80.2012}, \cite{AllenRosenbloom2012}.  In the field of evolutionary genetics, a free spatial version of equation \eqref{eq} was introduced by Tsimring, Levine and Kessler in \cite{PhysRevLett.76.4440}, where they propose a mean-field theory for the evolution of RNA virus population. Without mutations, and under the constraint of constant mass $\textstyle \int_\mathbb{R} u(t,x)\, dx=1$, the dynamics   is given by
\begin{equation}\label{nodiffusion}
\frac{\partial u}{\partial t} =u\left(\mathcal{W}(x)-\int_\mathbb{R}\mathcal{W}(y) u(t,y)\,dy\right),
\end{equation} 
with $\mathcal W(x)=x$ in \cite{PhysRevLett.76.4440}. In this context, $u(t,x)$ represents the density of a population (at time $t$ and per unit of phenotypic trait) on a one-dimensional phenotypic trait space. The function $\mathcal{W}(x)$ represents the \emph{fitness} of the phenotype $x$ and models the  individual reproductive success; thus the non-local term 
\begin{equation*} 
\overline{u}(t):= \int_\mathbb{R} \mathcal{W}(y) u(t,y)\, dy
\end{equation*}
stands for the mean fitness at time $t$. 

As a first step to take into account evolutionary phenomena,  mutations are modelled by the local diffusion operator $\sigma^2 \partial^2_x$, where $\sigma^2$ is the mutation rate, so that equation \eqref{nodiffusion} is transferred into \eqref{eq}. We refer to the recent paper \cite{Wak-Fun-Yok-17} for a rigorous derivation of
the replicator-mutator problem \eqref{eq} from individual based models.

Equation \eqref{eq} is supplemented with a non-negative and bounded, initial data $u_0(\cdot)\geq 0$ such that $\textstyle \int_\mathbb{R} u_0(x)\,dx=1$, so that, \emph{formally}, $\textstyle \int_\mathbb{R} u(t,x)\, dx=1$ for later times. Indeed, integrating formally \eqref{eq} over $x\in\mathbb{R}$, the total mass \begin{equation*}
m(t):=\int_\mathbb{R}u(t,x)\, dx
\end{equation*} solves the initial value problem
\[\frac{d}{dt}{m}(t)=(1-m(t))\overline{u}(t),\ m(0)=1.\]
Hence, by Gronwall's lemma,  $m(t)=1$, as long as $\overline{u}(t)$  is meaningful. 

\medskip

The case of linear fitness function, $\mathcal{W}(x)=x$, was the first introduced in \cite{PhysRevLett.76.4440}, but little was known concerning  existence and behaviours of solutions. Let us here mention the main result of Biktashev \cite{Biktashev2014}: for compactly supported initial data, solutions converge, as $t$ goes to infinity, to a Gaussian profile, where the convergence is understood in terms of the moments of $u(t, \cdot)$. In a recent paper \cite{AlfaroCarles14}, Alfaro and Carles proved that, thanks to a tricky change of unknown based on the Avron-Herbst formula (coming from quantum mechanics), equation \eqref{eq} can be reduced to the heat equation. This enables to compute the solution explicitly and describe contrasted behaviours depending on the tails of the initial datum: either the solution is global and tends, as $t$ tends to infinity, to a Gaussian profile which is centred around $x(t)\sim t^2$ (acceleration) and is flattening (extinction in infinite horizon), or the solution becomes extinct in finite time (or even immediately) thus contradicting the conservation of the mass, previously formally observed. 

For quadratic fitness functions, $\mathcal{W}(x) = \pm x^2$, it turns out that the equation can again be reduced to the heat equation \cite{AlfaroCarles2017}, up to an additional use of the generalized lens transform of the Schrödinger equation. In the case $\mathcal{W}(x)=x^2$, for any initial data, there is extinction at a finite time which is always bounded from above by $T^*=\tfrac{\pi}{4\sigma}$. Roughly speaking, both the right and left tails quickly enlarge, so that, in order to
conserve the mass, the central part is quickly decreasing. Then the non-local mean fitness term $\textstyle \int_\R y^2 u(t,y)\, dy$ becomes infinite very quickly and equation (1) becomes meaningless (extinction).
On the other hand, when $\mathcal{W}(x)=-x^2$, for any initial data, the solution is global and tends, as $t$ tends to infinity, to an universal stationary Gaussian profile.

The aforementioned cases $\mathcal W(x)=x$ and $\mathcal W(x)=x^2$ share the property of being unbounded from above, meaning that some phenotypes are infinitely well-adapted. This unlimited growth rate of
$u(t,x)$ in \eqref{eq} yields rich mathematical behaviours (acceleration, extinction) but is not admissible for biological applications. To deal
with such a problem, for the linear fitness case, some works
consider a ``cut-off version'' of \eqref{eq} at large $x$
\cite{PhysRevLett.76.4440}, \cite{RWC02}, \cite{SG10}, or provide a proper stochastic treatment for large phenotypic trait region \cite{RBW08}. 

On the other hand, $\mathcal W(x)=-x^{2}$ is referred to as a confining fitness function, typically preventing extinction phenomena.  However, it does not suffice to  take into account more realistic cases for which fitness functions are defined by a linear combination of two components (e.g. birth and death rates), each maximized by different optimal values of the underlying trait, a typical case being $\mathcal{W}(x)=x^2-x^4$.

Our main goal is thus to provide a rigorous treatment of the Cauchy problem \eqref{eq} when the fitness function $\mathcal W$ is confining. For a relatively large class of such fitness functions, we prove well-posedness, and show that the solution of \eqref{eq} converges to the principal eigenfunction (or ground state) of the underlying Schrödinger operator divided by its mass. This requires rather non-standard estimates on the eigenelements. Also, from a modelling perspective, this enables to reproduce 
 \emph{evolutionary branching},  consisting of the spontaneous splitting from uni-modal to multi-modal distribution of the trait.

 Such  splitting phenomena have long been discussed and analysed in different frameworks, see e.g. \cite{Lor-Mir-Per-11} via Hamilton-Jacobi technics, \cite{WI13} within finite populations, or \cite{MeleardMirrahimi2015} for a Lotka-Volterra system in a bounded domain. In a replicator-mutator context, let us notice that, while branching in \eqref{eq}  is mainly induced by the fitness function, it was recently obtained in  \cite{Gil-17} through different means. Precisely, the authors study  the case of linear fitness $\mathcal{W}(x)=x$ but non-local diffusion ${J*
 u-u}$ (mutation kernel), namely
\begin{equation*}
\partial _t u={J* u-u} +\,  u \left(x-\int _\R yu(t,y)\,dy\right).
\end{equation*}
Their approach \cite{Martin1541}, \cite{Gil-17} is based on {\it Cumulant Generating Functions} (CGF): it turns out that the CGF satisfies a first order non-local partial differential equation that can be explicitly solved, thus giving access to many informations such as  mean trait, variance, position of the leading edge. When a purely deleterious mutation kernel $J$ balances the infinite growth rate of $\mathcal W(x)=x$, they reveal some branching scenarios.
  
\medskip
 
The paper is organized as follows. In Section \ref{s:spectral} we present the underlying linear material. In Section \ref{s:main} we prove the well-possessedness of the Cauchy problem associated to \eqref{eq}. We also provide an explicit expression of the solution and studies its long time behaviour. In Section \ref{s:branching} we discuss, through rigorous details or numerical explorations, the conditions on the shape of the fitness function $\mathcal W$ and on the mutation parameter $\sigma>0$ for branching phenomenon to occur. Finally, we briefly conclude in Section \ref{s:discussion}.

\section{Some spectral properties}\label{s:spectral}

In this section, we present some linear material. We first quote some very classical results \cite{titchmarsh_eigenfunction_1946}, \cite{ReedSimonVol4}, \cite{Agmon}, \cite{Hel-Rob-82}, \cite{Hel-Sjo-85}, \cite{Hel-book-88}, \cite{Eremenko2008} for Schrödinger operators, and then prove less standard estimates on the eigenfunctions, which are crucial for later analysis. 

\subsection{Confining fitness functions and eigenvalues properties}\label{ss:eigenvalues}

Confining fitness functions tend to $-\infty$ at infinity. In quantum mechanics, this corresponds to potentials describing the evolution of quantum particles subject to an external field force that prevents them from escaping to infinity, that is, particles have a high probability of presence in a bounded spatial region. 

\begin{assumption}[Confining fitness function]\label{ass:confining} The fitness function $\mathcal W$  is continuous and confining, that is 
\[ 
\lim_{\abs{x}\to \infty} \mathcal{W}(x)=-\infty.
\]
\end{assumption}

\begin{prop}[Spectral basis]\label{prop:eigen}
Let  $\mathcal{W}$ satisfy Assumption \ref{ass:confining}. Then the operator 
\begin{equation}\label{def:H}
{\mathcal{H}}:=-\sigma^2 \frac{d^{2}}{dx^{2}}-\mathcal{W}(x)
\end{equation}
is essentially self-adjoint on $\mathscr{C}^{\infty}_{c} (\R)$, and has discrete spectrum:  there exists an orthonormal basis $\{\phi_k\}_{k\in \N}$ of $L^2(\R)$ consisting of eigenfunctions of ${\mathcal{H}}$
\[
{\mathcal{H}} \phi_k = \lambda_k \phi_k, \quad \Vert \phi _k\Vert _{L^{2}(\R)}=1,
\]
with corresponding eigenvalues
\[
\lambda_0<\lambda_1\leq \lambda_2\leq \cdots \leq \lambda_k\to +\infty,
\]
 of finite multiplicity.
\end{prop}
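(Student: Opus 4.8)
The plan is to recognise $\mathcal{H}$ as a one-dimensional Schr\"odinger operator with a confining potential and then run the classical spectral machinery, keeping the argument essentially self-contained. Write $V := -\mathcal{W}$, so that $\mathcal{H} = -\sigma^2\frac{d^2}{dx^2} + V$ with $V$ continuous and, by Assumption~\ref{ass:confining}, $V(x)\to+\infty$ as $\abs{x}\to\infty$; in particular $V$ is bounded below, say $V\ge -c_0$ for some $c_0\in\R$. The first step is essential self-adjointness on $\mathscr{C}^{\infty}_{c}(\R)$: since $V$ is continuous and bounded below, each of the endpoints $\pm\infty$ is a limit-point endpoint in the sense of Weyl, so Weyl's limit-point/limit-circle theory provides a unique self-adjoint extension (\cite{titchmarsh_eigenfunction_1946}, \cite{ReedSimonVol4}); I keep the notation $\mathcal{H}$ for it. Its quadratic form is $q(u) = \sigma^2\norme{u'}_{L^2}^2 + \int_\R V\abs{u}^2$, with form domain contained in $H^1(\R)$, and $\mathcal{H}\ge -c_0$, so that $\lambda_0 := \min\mathrm{spec}(\mathcal{H}) = \inf_{u\neq 0} q(u)/\norme{u}_{L^2}^2$ is well defined.

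The crux is to show that $\mathcal{H}$ has compact resolvent; discreteness of the spectrum and the orthonormal eigenbasis then follow from the spectral theorem. For this I would take a sequence $(u_n)$ bounded in $L^2(\R)$ with $(\mathcal{H}u_n)$ bounded in $L^2(\R)$ and extract an $L^2$-convergent subsequence. Boundedness of $\langle\mathcal{H}u_n,u_n\rangle$ gives a uniform bound on $\sigma^2\norme{u_n'}_{L^2}^2 + \int_\R(V+c_0)\abs{u_n}^2$. The confining property is exactly what makes the argument work: given $\varepsilon>0$, pick $R>0$ with $V+c_0\ge 1/\varepsilon$ on $\{\abs{x}>R\}$, so that $\int_{\abs{x}>R}\abs{u_n}^2 \le \varepsilon\sup_n\int_\R(V+c_0)\abs{u_n}^2$, i.e.\ the sequence is uniformly tight. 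Coupling this tightness with the uniform $H^1(-R,R)$ bounds and the compact Rellich embedding $H^1(-R,R)\hookrightarrow L^2(-R,R)$, a diagonal extraction over $R\to\infty$ produces a subsequence Cauchy in $L^2(\R)$. Hence $(\mathcal{H}+c_0+1)^{-1}$ is compact, so the spectrum of $\mathcal{H}$ consists of real eigenvalues of finite multiplicity, bounded below and accumulating only at $+\infty$; diagonalising and reordering gives the orthonormal basis $\{\phi_k\}_{k\in\N}$ with $\lambda_0\le\lambda_1\le\cdots\to+\infty$.

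It then remains to upgrade $\lambda_0\le\lambda_1$ to the strict inequality $\lambda_0<\lambda_1$, i.e.\ to prove that the ground state is simple, for which I would use positivity. An eigenfunction $\phi_0$ associated with $\lambda_0$ may be taken real (real and imaginary parts are themselves eigenfunctions) and it minimises the Rayleigh quotient; since $\norme{(\abs{\phi_0})'}_{L^2} = \norme{\phi_0'}_{L^2}$ and $\int_\R V\abs{\phi_0}^2 = \int_\R V\phi_0^2$, the function $\abs{\phi_0}$ is again a minimiser, hence $\abs{\phi_0}\in\ker(\mathcal{H}-\lambda_0)$. By elliptic regularity ($V$ continuous) $\abs{\phi_0}$ is $\mathscr{C}^2$ and solves $-\sigma^2(\abs{\phi_0})'' + V\abs{\phi_0} = \lambda_0\abs{\phi_0}$ with $\abs{\phi_0}\ge 0$; the strong maximum principle (equivalently, uniqueness for the associated linear ODE) then forces $\abs{\phi_0}\equiv 0$ or $\abs{\phi_0}>0$ on $\R$, so, as $\phi_0\not\equiv 0$, $\phi_0$ has constant sign. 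If $\ker(\mathcal{H}-\lambda_0)$ contained two orthonormal functions they would both have constant sign, contradicting their orthogonality; hence $\lambda_0$ is simple.

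I expect the genuine obstacle to be the compactness of the resolvent — concretely, extracting uniform tightness from the confining tail of $-\mathcal{W}$ and patching it together with the local Rellich compactness — whereas essential self-adjointness and the positivity argument for simplicity of $\lambda_0$ are comparatively routine. Alternatively, the whole proposition can be obtained by invoking directly the standard theory of Schr\"odinger operators with confining potentials (\cite{ReedSimonVol4}, \cite{Agmon}); the route above has the side benefit of also yielding the strict positivity of $\phi_0$, which is used later in the paper.
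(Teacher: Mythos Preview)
Your proof is correct. The paper itself does not give a proof of this proposition: it merely records, in a remark, that the result is classical and refers to \cite{ReedSimonVol4} and \cite[Chapter~3, Theorems~1.4 and~1.6]{Takhtajan}. Your route --- Weyl limit-point for essential self-adjointness, tightness from the confining tail of $-\mathcal{W}$ combined with local Rellich compactness for the compact resolvent, and the positivity/maximum-principle argument for simplicity of $\lambda_0$ --- is the standard one underlying those references, and has the advantage of being essentially self-contained while also delivering the strict positivity of $\phi_0$, a fact the paper relies on later (e.g.\ in the proof of Corollary~\ref{cor:long-time}).
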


\begin{rem}
Proposition \ref{prop:eigen} is a classical result  \cite{ReedSimonVol4}, \cite[Chapter 3, Theorem 1.4 and Theorem 1.6]{Takhtajan} for Schrödinger operators with confining potential  $V_{\rm conf}(x)=-\mathcal{W}(x)$. In this paper, the minus sign is due to the biological interpretation of the fitness function $\mathcal{W}$.
\end{rem}

In the quantum mechanics terminology, $\phi_0$ is known as the \emph{ground state}, corresponding to the bound-state of minimal energy $\lambda_0$. In this paper we refer  to the couple $(\phi_0,\lambda_0)$ indistinctly as ground state/ground state energy or as principal eigenfunction/principal eigenvalue. 

The  principal eigenvalue $\lambda_0$ can be characterised by the  variational formulation%
\begin{equation}\label{VariationalPrinciple}
\lambda_0= \inf \left\{\mathcal{E}(u): u\in \mathscr{C}^{\infty}_{c} (\R), \normeLq{2}{\R}{u}=1 \right\},
\end{equation}
where $\mathcal{E}$ is the energy functional given by
\begin{equation*}
\mathcal{E}(u)=\sigma^2 \int_\R \abs{\frac{\partial u}{\partial x}}^2 \,dx + \int_\R -\mathcal{W}(x) \abs{u(x)}^2 \,dx.
\end{equation*}
Using concentrated test functions, the above formula enables to understand the behaviour of the principal eigenvalue $\lambda_0=\lambda_0(\sigma)$ as the mutation rate $\sigma$ tends to 0. The following will be used in Section \ref{s:branching} to prove some branching phenomena.
 
\begin{prop}[Asymptotics for $\lambda_0(\sigma)$ as $\sigma \to 0$]\label{prop:sigma-pt}
Let $\mathcal{W}$ satisfy Assumption \ref{ass:confining}. Assume that $\mathcal W$ reaches a global maximum $M$ at $x=\alpha$. Then $\lim_{\sigma\to 0^+}\lambda_0(\sigma) =-M$.
\end{prop}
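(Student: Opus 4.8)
The plan is to estimate $\lambda_0(\sigma)$ from above and below via the variational principle \eqref{VariationalPrinciple}, and to check that both bounds tend to $-M$ as $\sigma\to 0^+$.

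The lower bound is immediate. Since $M$ is the global maximum of $\mathcal W$, we have $-\mathcal W(x)\geq -M$ for every $x\in\R$, hence for any admissible competitor $u\in\mathscr{C}^\infty_c(\R)$ with $\normeLq{2}{\R}{u}=1$,
\[
\mathcal{E}(u)=\sigma^2\int_\R\abs{\frac{\partial u}{\partial x}}^2\,dx+\int_\R -\mathcal W(x)\abs{u(x)}^2\,dx\;\geq\;-M\int_\R\abs{u(x)}^2\,dx=-M .
\]
Taking the infimum over $u$ gives $\lambda_0(\sigma)\geq -M$ for every $\sigma>0$.

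For the upper bound I would use concentrated test functions peaked at the maximizer $\alpha$. Fix once and for all a function $\psi\in\mathscr{C}^\infty_c(\R)$ with $\norme{\psi}_{L^2(\R)}=1$, and for a length scale $\rho>0$ set $u_\rho(x):=\rho^{-1/2}\psi\!\left(\frac{x-\alpha}{\rho}\right)$, which again belongs to $\mathscr{C}^\infty_c(\R)$ and has unit $L^2$ norm. A change of variables yields
\[
\mathcal{E}(u_\rho)=\frac{\sigma^2}{\rho^2}\int_\R\abs{\psi'(y)}^2\,dy+\int_\R -\mathcal W(\alpha+\rho y)\,\abs{\psi(y)}^2\,dy .
\]
Now choose $\rho=\rho(\sigma)=\sqrt\sigma$ (any scaling with $\sigma\ll\rho\ll 1$ works): the first term equals $\sigma\int_\R\abs{\psi'}^2\,dy\to 0$, while, since $\mathcal W$ is continuous and the integrand is supported in the fixed compact set $\operatorname{supp}\psi$, we have $-\mathcal W(\alpha+\rho y)\to -\mathcal W(\alpha)=-M$ uniformly on $\operatorname{supp}\psi$, so the second term converges to $-M\norme{\psi}_{L^2(\R)}^2=-M$. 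Therefore $\lambda_0(\sigma)\leq\mathcal{E}(u_{\rho(\sigma)})\to -M$, i.e. $\limsup_{\sigma\to 0^+}\lambda_0(\sigma)\leq -M$.

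Combining the two bounds gives $\lim_{\sigma\to 0^+}\lambda_0(\sigma)=-M$. There is no real difficulty here; the only points deserving a line of care are verifying that the rescaled bumps $u_\rho$ are genuine admissible test functions in \eqref{VariationalPrinciple} (smoothness, compact support, $L^2$-normalization, all preserved by the rescaling) and justifying the limit in the potential term, which is harmless because everything takes place on a fixed compact set on which the continuous function $\mathcal W$ is bounded.
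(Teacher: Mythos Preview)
Your proof is correct and follows essentially the same approach as the paper: the lower bound $\lambda_0(\sigma)\geq -M$ comes directly from $-\mathcal W\geq -M$ in the energy, and the upper bound uses the same rescaled bump $u_{\sqrt\sigma}(x)=\sigma^{-1/4}\psi((x-\alpha)/\sqrt\sigma)$ plugged into \eqref{VariationalPrinciple}. The only cosmetic difference is that you justify the limit in the potential term via uniform convergence on $\operatorname{supp}\psi$, whereas the paper invokes dominated convergence; both are immediate here.
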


\begin{proof} For the convenience of the reader, we give the proof of this standard fact. Let $p$ be a smooth, non-negative, and compactly supported in $[-1,1]$ function with $\Vert p\Vert _{L^2(\R)}=1$. We define the test function 
\[
p_\sigma(x):=\frac{1}{\sigma ^{1/4}}p\left(\frac{x-\alpha}{\sqrt \sigma}\right).
\] 
From the variational formula \eqref{VariationalPrinciple}, we have
\[-M\leq \lambda_0(\sigma)\leq \sigma^2 \int_\R \abs{{\partial_x p_\sigma}(x)}^2 \,dx + \int_\R -\mathcal{W}(x) \abs{p_\sigma (x)}^2 \,dx.\]
The first integral in the right hand side is given by
\[\sigma^2 \int_\R \abs{\partial _x p_\sigma (x)}^2  \,dx = \sigma  \normeLq{2}{\R}{p'}\to 0,\]
as $\sigma \to 0^{+}$. The second integral gives 
\[\int_\R -\mathcal{W}(x) \abs{p_\sigma(x)}^2 \,dx= \int_\R -\mathcal{W}(\alpha+\sqrt \sigma y) p(y)^2 \, dy\]
which,  by the $L^1$-dominated convergence theorem tends to $-M\normeLq{2}{\R}{p}=-M$  as $\sigma \to 0^{+}$.
\end{proof}

In the subsequent sections, we will quote results on the spectral properties of Schrödinger operators, in particular an asymptotics for the eigenvalues $\lambda _k$ as $k\to +\infty$. As far as we know, the available results require to assume  that the fitness $\mathcal{W}$ is polynomial.

\begin{assumption}[Polynomial confining fitness function]\label{ass:polynome} The fitness function $\mathcal{W}$ is a real polynomial of degree $2s$:
\[
\mathcal{W}(x)=-x^{2s}+\sum _{k=0}^{2s-1} w_k x^{k},
\]
for some integer $s \geq 1$ and some real numbers $w_k$, $0 \leq k \leq 2s-1$.
\end{assumption}

Under Assumption \ref{ass:polynome}, elliptic regularity theory insures that the eigenfunctions are infinitely differentiable. Furthermore, all the derivatives of each eigenfunction are square-integrable \cite{Gagelman2012}. Notice that it is also known that all eigenfunctions actually belong to the Schwartz space $\mathcal{S} (\R)$.

\begin{prop}[Asymptotics for eigenvalues]\label{prop:asymptotic}
Let  $\mathcal{W}$ satisfy Assumption \ref{ass:polynome}. Then all eigenvalues of $\mathcal H$ are simple and
\begin{equation}\label{asymptoticEigenvalues}
\lambda _k \sim {C_{s,\sigma}\, k}^{\frac{2s}{s+1}} \quad \text{ as } k\to +\infty,
\end{equation}
where $C_{s,\sigma}:= \left(\frac{\sigma \sqrt{\pi}\, \Gamma(\frac{3}{2}+\frac{1}{2s}) }{\Gamma(1+\frac{1}{2s})} \right)^{\frac{2s}{s+1}}
$, with $\Gamma (z)= \int\limits_{\R^+}  t^{z-1}\,e^{-t}\,\mathrm{d}t$ being the gamma function.
\end{prop}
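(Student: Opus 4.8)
The plan is to reduce the asymptotics of the eigenvalues to a semiclassical (Weyl) counting estimate and then invert it. First I would establish simplicity of the eigenvalues: in one dimension, a Schrödinger operator $-\sigma^2 \partial_x^2 + V(x)$ with $V$ continuous and confining has only simple eigenvalues, since any two $L^2$-eigenfunctions for the same eigenvalue satisfy the same second-order linear ODE and, being decaying at $+\infty$, must be proportional by the Wronskian argument (the Wronskian is constant and forced to vanish by the decay). This upgrades the $\lambda_0 < \lambda_1 \le \lambda_2 \le \cdots$ of Proposition \ref{prop:eigen} to strict inequalities throughout.

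For the asymptotic formula itself, the strategy is to compute the leading term of the eigenvalue counting function
\[
N(\lambda) := \#\{k : \lambda_k \le \lambda\}
\]
via the classical Weyl law for confining potentials. Heuristically, $N(\lambda)$ is the phase-space volume $\frac{1}{2\pi}\bigl|\{(x,\xi) : \sigma^2\xi^2 - \mathcal W(x) \le \lambda\}\bigr|$. Since $\mathcal W(x) = -x^{2s} + (\text{lower order})$, for large $\lambda$ the region $-\mathcal W(x) \le \lambda$ is essentially the interval $|x| \le \lambda^{1/(2s)}(1+o(1))$, and integrating the $\xi$-extent $\sigma^{-1}\sqrt{\lambda + \mathcal W(x)}$ over that interval gives
\[
N(\lambda) = \frac{1}{\pi \sigma}\int_{\{-\mathcal W \le \lambda\}} \sqrt{\lambda + \mathcal W(x)}\,dx \;\sim\; \frac{1}{\pi\sigma}\,\lambda^{\frac{1}{2}+\frac{1}{2s}} \int_{-1}^{1}\sqrt{1 - |t|^{2s}}\,dt.
\]
The remaining $t$-integral is a Beta integral: substituting $u = |t|^{2s}$ yields $\int_{-1}^1 \sqrt{1-|t|^{2s}}\,dt = \frac{1}{s}B\!\left(\tfrac{1}{2s}, \tfrac{3}{2}\right) = \frac{\sqrt\pi\,\Gamma(1+\frac{1}{2s})}{(s+1)\,\Gamma(\frac{3}{2}+\frac{1}{2s})}$ after simplification using $\Gamma(z+1)=z\Gamma(z)$. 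Then inverting $N(\lambda_k) \sim k$ gives $\lambda_k \sim C\,k^{\frac{2s}{s+1}}$, and matching the constant produces exactly $C_{s,\sigma}$ as stated (the exponent $\frac{2s}{s+1}$ is the reciprocal of $\frac12+\frac{1}{2s}$, and the power of $\sigma$ and the Gamma-factor combine as claimed).

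To make the Weyl estimate rigorous without reproving semiclassical analysis from scratch, I would instead quote the known results for polynomial (or more generally homogeneous-at-infinity) Schrödinger operators: under Assumption \ref{ass:polynome} the eigenvalue asymptotics are standard (see the references assembled in the excerpt, e.g. \cite{titchmarsh_eigenfunction_1946}, \cite{ReedSimonVol4}, \cite{Hel-Rob-82}). One clean route is the Bohr--Sommerfeld quantization rule $\int_{\{-\mathcal W\le\lambda_k\}} \sigma^{-1}\sqrt{\lambda_k+\mathcal W(x)}\,dx = \pi(k+\tfrac12) + o(1)$, valid for one-dimensional confining polynomial potentials; evaluating the left side asymptotically as above and solving for $\lambda_k$ gives \eqref{asymptoticEigenvalues} directly, with the lower-order terms $w_k x^k$ contributing only to the error because they are dominated by $-x^{2s}$ on the relevant scale $|x|\lesssim \lambda^{1/(2s)}$.

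The main obstacle is purely bookkeeping: carefully controlling the effect of the lower-order polynomial terms $\sum_{k=0}^{2s-1} w_k x^k$ on the turning points and on the phase integral, and then disentangling the exact constant $C_{s,\sigma}$ from the Beta-function computation so that the powers of $\pi$, $\sigma$, and the two Gamma values land exactly as written. None of this is conceptually hard once the Weyl/Bohr--Sommerfeld input is granted; the delicate part is simply verifying that the stated $C_{s,\sigma}$ is the correct closed form of $\bigl(\pi\sigma / \int_{-1}^1\sqrt{1-|t|^{2s}}\,dt\bigr)^{2s/(s+1)}$ after the Gamma simplifications.
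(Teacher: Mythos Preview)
The paper does not actually prove this proposition: immediately after the statement it simply writes ``We refer to \cite{titchmarsh_eigenfunction_1946}, \cite{Eremenko2008} and the references therein for more details on the above asymptotic formula,'' and moves on. So there is no argument in the paper to compare against; the result is imported wholesale from the literature.

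Your sketch is in fact the classical route those references take (Titchmarsh's treatment is precisely the Bohr--Sommerfeld/Weyl computation you outline), so in that sense you are reconstructing what the cited sources do rather than offering a genuinely different proof. The Wronskian argument for simplicity is correct, and the phase-space counting followed by inversion is the right mechanism; the observation that the lower-order terms $\sum w_k x^k$ perturb the turning points and the phase integral only at subleading order is the key point that lets one replace $-\mathcal W$ by $x^{2s}$ in the asymptotics.

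One small slip: your intermediate evaluation of the Beta integral carries a spurious $(s+1)$ in the denominator. The correct value is
\[
\int_{-1}^{1}\sqrt{1-|t|^{2s}}\,dt=\frac{1}{s}\,B\!\left(\tfrac{1}{2s},\tfrac{3}{2}\right)=\frac{\sqrt{\pi}\,\Gamma(1+\tfrac{1}{2s})}{\Gamma(\tfrac{3}{2}+\tfrac{1}{2s})},
\]
with no extra factor. This does not affect your conclusion, since your final check that $C_{s,\sigma}=\bigl(\pi\sigma\big/\!\int_{-1}^{1}\sqrt{1-|t|^{2s}}\,dt\bigr)^{2s/(s+1)}$ is stated correctly and indeed recovers the constant in \eqref{asymptoticEigenvalues}.
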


We refer to \cite{titchmarsh_eigenfunction_1946}, \cite{Eremenko2008} and the references therein for more details on the above asymptotic formula. Furthermore, in the case of a symmetric fitness  $\mathcal{W}(-x)=\mathcal{W}(x)$, the simplicity of eigenvalues enforce all eigenfunctions to be even or odd. In particular the principal eigenfunction $\phi_0$ (\emph{ground state}) is even since it is known to have constant sign.

\begin{rem} Assume that $\mathcal W$ is such that $P-c\leq \mathcal W\leq P+c$ for some polynomial $P$ as in Assumption \ref{ass:polynome} and some constant $c>0$. From Courant-Fisher's theorem, that is the variational characterization of the eigenvalues,  we deduce that $\lambda_k^\prime-c\leq \lambda_k\leq \lambda^\prime_k+c$,
where $\lambda_k^\prime$ are the eigenvalues of the Hamiltonian with potential $P$. Hence, $\lambda_k$ share with $\lambda_k^\prime$  the asymptotics \eqref{asymptoticEigenvalues}, which is the  keystone for deriving the estimates  on eigenfunctions in subsection \ref{ss:eigenfunctions}, and thereafter our main results in Section \ref{s:main}. Hence, our results apply to such fitness functions, covering in particular the case of the  so-called pseudo-polynomials (i.e. smooth functions which coincide, outside of a compact region, with a polynomial $P$ as in Assumption \ref{ass:polynome}), which are relevant for numerical computations. 
\end{rem}

\subsection{\texorpdfstring{$L^1$}{L1}, \texorpdfstring{$L^\infty$}{Linf} and weighted  \texorpdfstring{$L^1$}{L1} norms of the eigenfunctions}\label{ss:eigenfunctions}

In the study of spectral properties of Schrödinger operators, efforts tend to concentrate around asymptotic estimates of  eigenvalues or on the regularity and decay of eigenfunctions  \cite{titchmarsh_eigenfunction_1946}, \cite{Agmon},  \cite{PhysRevA.43.3241}, \cite{Eremenko2008bis}. Much less attention has been given to estimate the $L^1$ and $L^\infty$ norms  of eigenfunctions. One reason is that the natural framework for eigenfunctions of the Hamiltonian ${\mathcal{H}}$, defined in \eqref{def:H}, is $L^2(\R)$. On the other hand, the biological nature of problem \eqref{eq} suggests  $L^1(\R)$ and $L^\infty(\R)$  as  natural spaces for the solution $u(t,x)$. We therefore provide in this subsection rather non-standard estimates on the eigenfunctions.

\medskip

We define \[m_k:=\int_\R \phi_k(x)\, dx,\] the mass of the $k$-th eigenfunction $\phi_k$ of the  Hamiltonian $\mathcal H$. In the sequel, by 
\[
A_k\lesssim B_k
\]
we mean that there is $c>0$ such that, for all $k\geq 1$, $A_k\leq cB_k$.

\begin{prop}[$L^1$ norm of  eigenfunctions]\label{prop:mass-estimate}
Let $\mathcal W$ satisfy  Assumption \ref{ass:polynome}. Then we have
\begin{equation}\label{masse}
\vert m_k \vert \leq {\norme{\phi_k}}_{L^1(\R)} \lesssim  k^{\tfrac{1}{2(s+1)}}. 
\end{equation}
\end{prop}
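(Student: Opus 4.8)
The plan is to control $\norme{\phi_k}_{L^1(\R)}$ by splitting $\R$ into a "bulk" region, where the potential is small compared with $\lambda_k$, and a "tail" region, where Agmon-type decay makes the contribution negligible. On the bulk region we only have $L^2$ information on $\phi_k$, so we pay a factor equal to the square root of the length of that region via Cauchy--Schwarz; the length of the bulk is governed by the classically allowed region $\{x : -\mathcal W(x) \le \lambda_k\}$, which for a polynomial potential of degree $2s$ has size of order $\lambda_k^{1/(2s)}$. Combining this with the eigenvalue asymptotics $\lambda_k \sim C_{s,\sigma} k^{2s/(s+1)}$ from Proposition \ref{prop:asymptotic} gives a bulk length of order $k^{1/(s+1)}$, hence a bulk $L^1$ bound of order $k^{1/(2(s+1))}$, which is exactly the claimed exponent.

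Here is the order of the steps. First, fix the classically allowed region: choose $R_k>0$ so that $-\mathcal W(x) \ge 2\lambda_k$ for $\abs{x}\ge R_k$; since $-\mathcal W(x) = x^{2s}(1+o(1))$ as $\abs x\to\infty$, one may take $R_k \lesssim \lambda_k^{1/(2s)} \lesssim k^{1/(s+1)}$ (using that $\lambda_k\to+\infty$, so $R_k\to\infty$ and the leading term dominates). Second, on the bulk $\abs x\le R_k$, write $\int_{-R_k}^{R_k}\abs{\phi_k} \le (2R_k)^{1/2}\norme{\phi_k}_{L^2(\R)} = (2R_k)^{1/2}\lesssim k^{1/(2(s+1))}$. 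Third, for the tail $\abs x\ge R_k$, use the exponential (Agmon) decay of eigenfunctions: on this set $-\mathcal W - \lambda_k \ge \lambda_k \ge 0$, and a Combes--Thomas / Agmon estimate gives $\abs{\phi_k(x)} \lesssim \norme{\phi_k}_{L^\infty}\, e^{-\rho(x)/\sigma}$ where $\rho$ is the Agmon distance to the bulk; integrating this decaying exponential over the tail yields a contribution that is $o(1)$, or at worst $\lesssim 1$, in $k$ — in any case dominated by the bulk term. Summing the two regions gives $\norme{\phi_k}_{L^1(\R)}\lesssim k^{1/(2(s+1))}$, and $\abs{m_k}\le\norme{\phi_k}_{L^1(\R)}$ is immediate from the definition of $m_k$.

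The main obstacle is making the tail estimate both uniform in $k$ and quantitatively strong enough. A crude pointwise Agmon bound typically carries a prefactor depending on $\norme{\phi_k}_{L^\infty(\R)}$ and on how the Agmon distance scales, and one must check that these do not reintroduce a power of $k$ larger than $k^{1/(2(s+1))}$. The cleanest route is probably an energy/Agmon inequality directly at the $L^2$ level: multiply the eigenvalue equation by $e^{2\varphi/\sigma}\phi_k$ for a suitable Lipschitz weight $\varphi$ adapted to $\lambda_k$ (with $\abs{\varphi'}^2$ comparable to $-\mathcal W - \lambda_k$ on the tail), integrate by parts, and deduce that $\norme{e^{\varphi/\sigma}\phi_k}_{L^2}$ stays bounded; then Cauchy--Schwarz against $e^{-\varphi/\sigma}$ on the tail, whose $L^2$ norm is small, closes the estimate without ever invoking $\norme{\phi_k}_{L^\infty}$. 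One should also double-check the borderline case $s=1$ and verify that the implied constants depend only on $\mathcal W$ and $\sigma$, not on $k$, which is where the polynomial structure in Assumption \ref{ass:polynome} is genuinely used.
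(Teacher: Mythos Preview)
Your approach is correct in outline and yields the same exponent, but the paper's route to the tail estimate is considerably more elementary than Agmon decay. Both arguments use Cauchy--Schwarz on a bulk of radius $R_k\sim\lambda_k^{1/(2s)}$; the difference lies entirely in the tail. The paper observes that multiplying the eigenvalue equation by $\phi_k$ (no weight at all) and integrating gives
\[
\sigma^2\norme{\phi_k'}_{L^2}^2+\int_\R(-\mathcal W)\phi_k^2=\lambda_k,
\]
so in particular $\norme{x^s\phi_k}_{L^2}^2\lesssim\lambda_k$. This polynomial-weighted $L^2$ bound is already enough: on $\abs{x}\ge R$ one writes $\abs{\phi_k}=\abs{x}^{-s}\cdot\abs{x}^s\abs{\phi_k}$ and applies Cauchy--Schwarz to get a tail contribution $\lesssim R^{(1-2s)/2}\lambda_k^{1/2}$, which balances the bulk term at $R\sim\lambda_k^{1/(2s)}$ and gives $\norme{\phi_k}_{L^1}\lesssim\lambda_k^{1/(4s)}$. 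The paper packages this bulk/tail splitting as an abstract interpolation inequality (Lemma~\ref{lem:normes}):
\[
\norme{f}_{L^1(\R)}\lesssim\norme{f}_{L^2(\R)}^{1-1/(2s)}\norme{x^s f}_{L^2(\R)}^{1/(2s)}.
\]
So the ``main obstacle'' you identify --- making the tail uniform in $k$ without invoking $\norme{\phi_k}_{L^\infty}$ or a delicate Agmon argument --- simply evaporates: the unweighted energy identity already delivers exactly the moment bound needed. Your Agmon route would work (and gives a sharper tail, $O(\lambda_k^{-1/4})$ rather than $O(\lambda_k^{1/(4s)})$), but it is heavier machinery than the problem requires.
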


Before proving the above proposition, we need the following lemma which is of independent interest.

\begin{lem}\label{lem:normes}
Let $d\in \mathbb{N}^*$ and $N\in(d,+\infty)$ be given. Then there is a constant $C=C(d,N)>0$ such that, for all $f:\mathbb{R}^d\to \mathbb{R}$, 
\begin{equation}\label{lemmeRemi}
\norme{f}_{L^1(\R^d)} \leq C\ {{\norme{f}}}^{1-\delta}_{L^2(\R^d)}\; {\textstyle \norme{x^{N/2} f}}^{\delta}_{L^2(\R^d)}, \quad \delta:=d/ N.
\end{equation}
\end{lem}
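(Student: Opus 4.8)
The plan is to split the integral $\int_{\R^d}|f|$ according to whether $|x|$ is small or large, relative to a threshold $R>0$ to be optimised at the end. On the ball $\{|x|\le R\}$ I would bound $|f|$ by its $L^2$ norm using Cauchy--Schwarz, which costs a factor $\|\mathbbm{1}_{|x|\le R}\|_{L^2}=(\mathrm{vol}\,B_R)^{1/2}\asymp R^{d/2}$; thus $\int_{|x|\le R}|f|\lesssim R^{d/2}\|f\|_{L^2}$. On the complement $\{|x|> R\}$ I would insert the weight $|x|^{N/2}$: writing $|f|=|x|^{-N/2}\cdot|x|^{N/2}|f|$ and applying Cauchy--Schwarz gives $\int_{|x|>R}|f|\le \big(\int_{|x|>R}|x|^{-N}dx\big)^{1/2}\,\||x|^{N/2}f\|_{L^2}$. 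The condition $N>d$ is exactly what makes $\int_{|x|>R}|x|^{-N}\,dx$ finite, and in polar coordinates it equals a constant times $R^{d-N}$, so this term is $\lesssim R^{(d-N)/2}\||x|^{N/2}f\|_{L^2}$.

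Combining the two pieces, for every $R>0$,
\[
\|f\|_{L^1(\R^d)} \le C_1\, R^{d/2}\,\|f\|_{L^2(\R^d)} + C_2\, R^{(d-N)/2}\,\|x^{N/2}f\|_{L^2(\R^d)}.
\]
Now I would choose $R$ to balance the two terms, i.e. take $R$ so that $R^{d/2}\|f\|_{L^2}=R^{(d-N)/2}\|x^{N/2}f\|_{L^2}$, which gives $R^{N/2}=\|x^{N/2}f\|_{L^2}/\|f\|_{L^2}$, hence $R=\big(\|x^{N/2}f\|_{L^2}/\|f\|_{L^2}\big)^{2/N}$. Substituting back, both terms become a constant multiple of $\|f\|_{L^2}^{1-d/N}\,\|x^{N/2}f\|_{L^2}^{d/N}$, which is precisely \eqref{lemmeRemi} with $\delta=d/N$. (If either norm on the right-hand side is zero or infinite the inequality is trivial, so one may assume both are positive and finite, making the choice of $R$ legitimate.)

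There is essentially no serious obstacle here; the only points requiring a little care are the convergence $\int_{|x|>R}|x|^{-N}dx<\infty$ (guaranteed by $N>d$) and tracking that the optimising choice of $R$ indeed yields the stated exponents $1-\delta$ and $\delta$ — a routine computation with the powers. The resulting constant $C=C(d,N)$ absorbs the volume of the unit ball and the value of $\int_{|x|>1}|x|^{-N}dx$, and depends only on $d$ and $N$ as claimed.
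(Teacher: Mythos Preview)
Your proposal is correct and follows essentially the same route as the paper: split into $\{|x|\le R\}$ and $\{|x|>R\}$, apply Cauchy--Schwarz on each piece to get $C_1 R^{d/2}\|f\|_{L^2}+C_2 R^{(d-N)/2}\|x^{N/2}f\|_{L^2}$, and then optimise in $R$. The only cosmetic difference is that the paper minimises the right-hand side over $R$ rather than equating the two terms, which yields the same exponents and merely a different constant.
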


\begin{proof}
Let   $\mathcal{B}_R$ denote the open $d$-dimensional ball of radius $R>0$ and center  $0_{\R^d}$. We write
\[
\norme{f}_{L^1(\R^d)}  =\int\limits_{\mathcal{B}_R} \abs{f(x)}\, dx+\int\limits_{\R^d\setminus \mathcal{B}_R} \frac{1}{\abs{x}^{N/2}} \abs{x}^{N/2} \abs{f(x)}\, dx  =:\mathcal{I}_1+\mathcal{I}_2.
\]
By the Cauchy-Schwarz inequality we have
\[
\mathcal{I}_1 \leq \norme{f}_{L^2(\mathcal{B}_R)}\norme{1}_{L^2(\mathcal{B}_R)}= {\left[\frac{\pi^{d/2} R^d}{\Gamma\left(1+\frac{d}{2}\right)} \right]}^{1/2}\norme{f}_{L^2(\R^d)}=:C_1 R^{d/2}\norme{f}_{L^2(\R^d)},
\]
and 
\begin{eqnarray*}
\mathcal{I}_2 & \leq & {\Big(\int_{\R^d\setminus \mathcal{B}_R}\frac{1}{\abs{x}^N} \,dx\Big)}^{1/2} {\textstyle \norme{x^{N/2} f}}_{L^2(\R^d)}\nonumber \\
\nonumber \\
& \leq & C \Big(\int_R^{+\infty} \frac{1}{r^N}\, r^{d-1}\, dr \Big)^{1/2} {\textstyle \norme{x^{N/2} f}}_{L^2(\R^d)} \nonumber\\
& \leq  & C_2 R^{(d-N)/2} {\textstyle \norme{x^{N/2} f}}_{L^2(\R^d)},
\end{eqnarray*}
for some $C=C(d)>0$ and $C_2=C_2(d,N)>0$.  Summarizing, 
\begin{equation}\label{summarize}
\norme{f}_{L^1(\R^d)} \leq  C_1 R^{d/2}\norme{f}_{L^2(\R^d)}+ C_2 R^{(d-N)/2} {\textstyle \norme{x^{N/2} f}}_{L^2(\R^d)}.
\end{equation}
Now, we select
\[
R=\left( \frac{C_2 (N-d)}{C_1 d}\frac{\norme{x^{N/2}f}_{L^2(\R^d)}}{\norme{f}_{L^2(\R^d)}} \right)^{2/N}
\]
which minimizes the right hand side of  \eqref{summarize} and yields \eqref{lemmeRemi}.
\end{proof}

\begin{rem}
The correct power $\delta$ in \eqref{lemmeRemi} can be retrieved by a standard homogeneity argument. Indeed, defining $f_\lambda (x):=f(\lambda x)$ for $\lambda >0$, we get
\[
\normeLq{1}{\R^d}{f_\lambda}=\frac{1}{\lambda ^d}  \normeLq{1}{\R^d}{f},
\]
\[
\normeLq{2}{\R^d}{f_\lambda}=  \frac{1}{\lambda^{d/2}} \normeLq{2}{\R^d}{f},
\]
\[
\normeLq{2}{\R^d}{x^{N/2}f_\lambda}=  \frac{1}{\displaystyle \lambda^{(N+d)/2}} \normeLq{2}{\R^d}{x^{N/2}f},
\]
so that
\[
\frac{1}{\lambda^d} \lesssim \left(\frac{1}{\lambda^{d/2}}\right)^{1-\delta} \left(\frac{1}{\displaystyle \lambda^{(N+d)/2}}\right)^\delta.
\]
Powers of  $\lambda$ in both sides must coincide, which enforces $\delta =d/N$.
\end{rem}

We can now estimate the mass of the eigenfunctions.

\begin{proof}[Proof of Proposition \ref{prop:mass-estimate}] Up to subtracting a constant to $\mathcal W$, we can assume without loss of generality that $\mathcal W<0$. Multiplying by $\phi_k$ the eigenvalue equation \[-\sigma^2 \phi_k''-\mathcal{W}\phi_k= \lambda_k \phi_k\] and integrating over $x\in\R$, we get \begin{equation*}
\int_\R -\sigma^2\phi_k''(x) \phi_k(x) \, dx + \int_{\R} -\mathcal{W}(x) \phi_k(x)^2\, dx = \lambda_k \int_\R \phi_k^2(x) \, dx.
\end{equation*} Integrating by parts and recalling that  eigenfunctions $\phi_k$ are normalized in $L^2(\R)$,  we obtain
\[
\sigma^2 \int_\R \phi_k'(x)^2 \,dx+ \int_\R -\mathcal{W}(x) \phi_k(x)^2\, dx = \lambda_k,
\]
so that
\[
\int_\R -\mathcal{W}(x) \phi_k(x)^2\, dx\leq \lambda_k.
\]
Next, it follows from Assumption \ref{ass:polynome} (and $\mathcal{W}<0$) that there is $\gamma >0$ such that $-\mathcal{W}(x)\geq \gamma x^{2s}$ for all $x\in \R$, and thus
\[
 {\norme{x^s \phi_k}}^2_{L^2(\R)}  \leq \frac{\lambda_k}{\gamma}.
\]
Now, by Lemma \ref{lem:normes}, we have
\begin{equation*}
{ \norme{\phi_k}}_{L^1(\R)} \lesssim   {\norme{x^{s}\phi_k}}_{L^2(\R)}^{1/(2s)}\lesssim \lambda_k^{1/4s},
\end{equation*}
which, combined with \eqref{asymptoticEigenvalues}, implies \eqref{masse}. The proposition is proved.
\end{proof}

\begin{prop}[$L^\infty$ norm of eigenfunctions]\label{prop:nomreinfiniefonctionspropres} Let $\mathcal W$ satisfy  Assumption \ref{ass:polynome}. Then we have
\begin{equation}\label{normeinfinie}
{\norme{\phi_k }}_{L^\infty(\R)} \lesssim k^{\tfrac{s}{2(s+1)}}.
\end{equation}
\end{prop}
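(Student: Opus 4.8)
The plan is to derive the $L^\infty$ estimate from the already-established $L^1$ estimate \eqref{masse} together with a Gagliardo--Nirenberg--type interpolation inequality in one dimension, for which the only extra ingredient needed is a control of $\norme{\phi_k'}_{L^2(\R)}$. First I would recall that, from the computation in the proof of Proposition \ref{prop:mass-estimate} (after subtracting a constant so that $\mathcal{W}<0$), one has
\[
\sigma^2 \int_\R \phi_k'(x)^2\,dx \leq \lambda_k,
\]
hence $\norme{\phi_k'}_{L^2(\R)} \lesssim \lambda_k^{1/2}$, which by \eqref{asymptoticEigenvalues} gives $\norme{\phi_k'}_{L^2(\R)} \lesssim k^{s/(s+1)}$. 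Combined with the normalization $\norme{\phi_k}_{L^2(\R)}=1$, the one-dimensional Sobolev embedding $\norme{f}_{L^\infty(\R)}^2 \leq 2\norme{f}_{L^2(\R)}\norme{f'}_{L^2(\R)}$ then yields directly
\[
\norme{\phi_k}_{L^\infty(\R)} \lesssim \norme{\phi_k'}_{L^2(\R)}^{1/2} \lesssim k^{s/(2(s+1))},
\]
which is exactly \eqref{normeinfinie}.

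The one-dimensional inequality $\norme{f}_{L^\infty(\R)}^2 \leq 2 \norme{f}_{L^2(\R)} \norme{f'}_{L^2(\R)}$ is elementary: for $f\in \Ci{1}{\R}\cap L^2$ with $f'\in L^2$ one writes $f(x)^2 = 2\int_{-\infty}^x f(t)f'(t)\,dt \leq 2\norme{f}_{L^2}\norme{f'}_{L^2}$ by Cauchy--Schwarz, and since all eigenfunctions lie in the Schwartz space $\mathcal{S}(\R)$ under Assumption \ref{ass:polynome} (as noted in the text), there is no issue with decay or regularity. So I would state this as a short lemma or simply invoke it inline.

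Alternatively, one can avoid even quoting a named inequality and argue self-containedly: fix $x_0$ where $|\phi_k|$ attains (close to) its supremum; then for the interval $I$ of length $\ell$ around $x_0$ one has a lower bound $\|\phi_k\|_{L^\infty} \lesssim \ell^{-1/2}\|\phi_k\|_{L^2(I)} + \ell^{1/2}\|\phi_k'\|_{L^2(I)}$ by the mean-value/fundamental-theorem estimate, and optimizing over $\ell$ reproduces the geometric-mean bound. I expect no real obstacle here: the genuinely non-standard work (the eigenvalue asymptotics and the $L^1$ bound via Lemma \ref{lem:normes}) has already been done, and the $L^\infty$ bound is a clean corollary of the energy identity plus a standard 1D interpolation. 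The only point requiring a line of care is justifying the integration by parts / boundary terms, which is immediate from $\phi_k\in\mathcal{S}(\R)$.
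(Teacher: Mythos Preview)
Your proof is correct and is essentially identical to the paper's: both use the energy identity to bound $\norme{\phi_k'}_{L^2(\R)}\lesssim\lambda_k^{1/2}$ and then the one-dimensional inequality $\norme{\phi_k}_{L^\infty}^2\leq 2\norme{\phi_k}_{L^2}\norme{\phi_k'}_{L^2}$ (the paper writes it as $-\tfrac12\phi_k^2(x)=\int_x^{+\infty}\phi_k\phi_k'$, you integrate from $-\infty$). Note that, despite your opening sentence, you never actually use the $L^1$ bound \eqref{masse}; the argument rests only on $\norme{\phi_k}_{L^2}=1$ and the derivative estimate, so you may want to drop that framing.
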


\begin{proof} Since $-\frac 12 \phi_k^2(x)=\int_x^{+\infty}\phi_k(s)\phi_k'(s)ds$, we have
$$
{\norme{\phi_k }}_{L^\infty(\R)}^{2}\lesssim {\norme{\phi_k }}_{L^2(\R)}{\norme{\phi_k ' }}_{L^2(\R)}\lesssim \lambda _k^{1/2},
$$
and the conclusion follows from \eqref{asymptoticEigenvalues}.
\end{proof}

\begin{prop}[Weighted $L^1$ norm of eigenfunctions]\label{lem:L1normWphi}
Let $\mathcal W$ satisfy  Assumption \ref{ass:polynome}. Then we have
 \[{\norme{\mathcal{W} \phi_k}}_{L^1(\R)} \lesssim k^{\frac{5 s+2}{2 s+2}} . \]
\end{prop}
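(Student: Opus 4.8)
The goal is to bound $\norme{\mathcal{W}\phi_k}_{L^1(\R)}$, where $\mathcal W$ is a polynomial of degree $2s$ with leading term $-x^{2s}$. The natural strategy is to reduce everything to $L^2$-type quantities already controlled, via Cauchy--Schwarz together with the weighted estimate $\norme{x^s\phi_k}_{L^2(\R)}^2\lesssim \lambda_k$ obtained inside the proof of Proposition~\ref{prop:mass-estimate}, and then insert the eigenvalue asymptotics \eqref{asymptoticEigenvalues}. Since $\abs{\mathcal W(x)}\lesssim 1+\abs{x}^{2s}$, it suffices to estimate $\norme{(1+\abs{x}^{2s})\phi_k}_{L^1(\R)}$, and the dominant contribution is $\norme{\abs{x}^{2s}\phi_k}_{L^1(\R)}$.

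First I would split, as in Lemma~\ref{lem:normes}, over a ball $\mathcal B_R$ and its complement, but now with the weight $\abs{x}^{2s}$ inside the integral. On $\mathcal B_R$, Cauchy--Schwarz against $\norme{\abs{x}^{2s}}_{L^2(\mathcal B_R)}\lesssim R^{2s+1/2}$ gives a term $\lesssim R^{2s+1/2}\norme{\phi_k}_{L^2(\R)}=R^{2s+1/2}$ (one could sharpen this by using $\norme{x^s\phi_k}_{L^2}$ on the ball, but it is not needed). On $\R\setminus\mathcal B_R$, I would write $\abs{x}^{2s}\abs{\phi_k(x)} = \abs{x}^{-N/2}\cdot\abs{x}^{2s+N/2}\abs{\phi_k(x)}$ for a suitable exponent $N>1$, and apply Cauchy--Schwarz to get $\lesssim R^{(1-N)/2}\norme{\abs{x}^{2s+N/2}\phi_k}_{L^2(\R)}$. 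The remaining higher weighted norm $\norme{x^{p}\phi_k}_{L^2(\R)}$ with $p$ slightly above $2s$ must itself be estimated; for this I would use the interpolation inequality $\norme{x^p\phi_k}_{L^2} \le \norme{x^s\phi_k}_{L^2}^{\theta}\norme{x^q\phi_k}_{L^2}^{1-\theta}$ (Hölder in the measure $\phi_k^2\,dx$, which has total mass $1$), reducing to controlling $\norme{x^q\phi_k}_{L^2}$ for some integer $q>2s$; multiplying the eigenvalue equation by $x^{2q}\phi_k$ and integrating by parts, together with $-\mathcal W(x)\gtrsim x^{2s}$, yields a bootstrap bound $\norme{x^q\phi_k}_{L^2}^2\lesssim \lambda_k^{q/s}$ (the higher moments grow like powers of $\lambda_k$ dictated by the degree $2s$). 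Then I would optimize over $R$ in the two-term bound, exactly as in Lemma~\ref{lem:normes}, to obtain $\norme{\abs{x}^{2s}\phi_k}_{L^1(\R)}\lesssim \lambda_k^{\beta}$ for the appropriate exponent $\beta$, and finally substitute $\lambda_k \sim C_{s,\sigma}k^{2s/(s+1)}$ and check that $\beta\cdot\frac{2s}{s+1}=\frac{5s+2}{2s+2}$, i.e. $\beta=\frac{5s+2}{4s}$.

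The main obstacle I anticipate is controlling the high weighted $L^2$ moments $\norme{x^p\phi_k}_{L^2(\R)}$ for $p>s$ with the right power of $\lambda_k$: the crude bound from $-\mathcal W\gtrsim x^{2s}$ only gives control of the moment of order $s$, so one genuinely needs either a bootstrap using the eigenvalue equation (multiplying by $x^{2m}\phi_k$ and integrating by parts, carefully handling the derivative terms $\int x^{2m-2}\phi_k'^2$ and $\int x^{2m-2}\phi_k^2$ that appear) or the probabilistic-interpolation trick in the measure $\phi_k^2\,dx$ combined with a single such bootstrap step; keeping track of the exact exponents of $\lambda_k$ through this argument, and then through the $R$-optimization, is where the bookkeeping must be done with care so that the final exponent comes out to be precisely $\frac{5s+2}{2s+2}$ after inserting \eqref{asymptoticEigenvalues}. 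A cleaner alternative, which I would try first, is to avoid high moments altogether: write $\norme{\mathcal W\phi_k}_{L^1}\le \norme{(-\mathcal W)^{1/2}\phi_k}_{L^2}\cdot\norme{(-\mathcal W)^{1/2}\phi_k}_{L^2}^{?}$ — more precisely, use $\abs{\mathcal W}\phi_k^2$ integrates to something $\lesssim \lambda_k$ (already shown) and factor $\abs{\mathcal W\phi_k} = \abs{\mathcal W}^{1/2}\phi_k^{1/2}\cdot \abs{\mathcal W}^{1/2}\phi_k^{1/2}$, but this does not quite close; so the honest route is the weighted split plus one bootstrap, and I expect that to be the crux.
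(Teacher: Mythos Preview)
Your approach is sound and genuinely different from the paper's.

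The paper does not use weighted $L^2$ moments at all. Instead it locates the turning points $-y_k<0<x_k$ where $\mathcal W(x)+\lambda_k=0$ (so $x_k,y_k\sim\lambda_k^{1/(2s)}\lesssim k^{1/(s+1)}$), splits $\R$ into the zones $(-y_k,x_k)$, $(-2y_k,-y_k)\cup(x_k,2x_k)$ and $\R\setminus(-2y_k,2x_k)$, and on each zone bounds $\abs{\phi_k}$ by $\normeinf{\phi_k}\lesssim k^{s/(2(s+1))}$ together with, on the outer zone, an exponential decay $\abs{\phi_k(x)}\le\normeinf{\phi_k}e^{-(x-2x_k)}$ obtained from the comparison principle. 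The three contributions are then estimated using only the sizes of $x_k$, $\lambda_k$ and $\normeinf{\phi_k}$.

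Your route---bootstrap the eigenvalue equation by testing against $x^{2m}\phi_k$ to get $\norme{x^{p}\phi_k}_{L^2}^{2}\lesssim\lambda_k^{p/s}$, then feed this into a Lemma~\ref{lem:normes}--style split/optimize argument for $\abs{x}^{2s}\phi_k$---avoids pointwise comparison altogether and stays within $L^2$ machinery. The bootstrap is legitimate: integration by parts is justified since $\phi_k\in\mathcal S(\R)$, and $-\mathcal W(x)\ge\gamma x^{2s}$ (after the shift $\mathcal W\le-1$) closes the recursion $M_{m+s}\lesssim\lambda_k M_m + m^{2}M_{m-1}$, yielding $M_p\lesssim\lambda_k^{p/s}$ by induction and H\"older interpolation in the probability measure $\phi_k^{2}\,dx$. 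Carrying out the optimization over $R$ with these moment bounds actually produces the sharper exponent $\lambda_k^{(4s+1)/(4s)}\sim k^{(4s+1)/(2s+2)}$, strictly better than the $k^{(5s+2)/(2s+2)}$ you are aiming to match; so your bookkeeping worry is unfounded in the sense that you have slack to spare. The paper's argument is more hands-on and uses less auxiliary machinery, but loses a little in the exponent because it controls $\phi_k$ only through $\normeinf{\phi_k}$ rather than through its weighted $L^2$ norms.
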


\begin{proof} From Assumption \ref{ass:polynome} and $\lambda _k\to +\infty$, we can find $k_0\geq 0$ large enough so that the following facts hold for all $k\geq k_0$: there are $-y_k<0$ and $x_k>0$ such that \begin{eqnarray*}
\mathcal{W}(x)+\lambda_k \geq 0 &&  \forall x\in(-y_k,x_k)\\
\mathcal{W}(x)+\lambda_k=0 && \forall x\in \{-y_k,x_k\}\\
\mathcal{W}(x)+\lambda_k \leq 0 &&  \forall x\in\R\setminus (-y_k,x_k),
\end{eqnarray*}
and $\mathcal W$ is decreasing on $(-\infty,-y_k)\cup (x_k,+\infty)$. 
Assumption \ref{ass:polynome} implies that $x_k,y_k\sim \lambda _k^\frac{1}{2s}$ and thus, from Proposition \ref{prop:asymptotic}, $x_k,y_k \lesssim  k^{\frac{1}{s+1}}$. Next, up to enlarging $k_0$ if necessary, it follows from Assumption \ref{ass:polynome} that $\mathcal{W}(x)+\lambda_k\leq -1$ for all $x\in (2 x_k,+\infty)$. As a result, functions \[
\phi^\pm(x):=\pm\normeinf{\phi_k} e^{-(x-2 x_k)}\] are respectively  super and sub-solutions of the eigenvalue equation \[-\phi_k''-(\mathcal{W}(x)+\lambda_k)\phi_k=0,\] 
so that
\begin{equation}\label{loin}
\abs{\phi_k(x)}\leq \normeinf{\phi_k} e^{-(x-2x_k)}\quad \forall x\in(2x_k,+\infty),
\end{equation}
by the comparison principle. An analogous estimate holds on $(-\infty,2 y_k)$.

In order to estimate ${\norme{\mathcal{W}\phi_k}}_{L^1(\R)}$, we split the domain of integration  into three parts: $\Omega_1:=\R\setminus(-2y_k,2x_k)$, $\Omega_2:=(-2y_k,-y_k)\cup(x_k,2x_k)$ and $\Omega_3:=(-y_k,x_k)$. Setting \[
I_i := {\int}_{\Omega_i} \abs{\mathcal{W}(x)\phi_k(x)} \, dx=\int_{\Omega_i} -\mathcal{W}(x)\abs{\phi_k(x)} \, dx,
\] we decompose $\normeLq{1}{\R}{\mathcal W \phi_k}=I_1+I_2+I_3$. Notice that, 
\[\int_{2 x_k}^{+\infty} -\mathcal{W}(x) e^{-(x-2x_k)} \, dx= \left[-P(x) e^{-(x-2x_k)} \right]_{2x_k}^{+\infty}=P(2x_k),\] where $P$ is a polynomial of same degree as $\mathcal{W}$. Hence, from \eqref{loin} and Proposition \ref{prop:nomreinfiniefonctionspropres}, we get
\begin{eqnarray*}
I_1  &\lesssim& \normeinf{\phi_k} \int_{2 x_k}^{+\infty} \abs{\mathcal{W}(x)} e^{-(x-2x_k)} \, dx
  \lesssim k^{\frac{s}{2(s+1)}}P(2x_k)\\
   &\lesssim & k^{\frac {s}{2(s+1)}}x_k^{2s} \lesssim k^{\frac {s}{2(s+1)}}k^{\frac{2s}{s+1}}= k^{\frac{5 s}{2s+2}}.
\end{eqnarray*}
By monotonicity of $\mathcal{W}$ on $\Omega_2$, 
\[
I_2  \lesssim x_k \, \abs{\mathcal{W}(2 x_k)} \normeinf{\phi_k}\lesssim  x_k^{1+2s} k^{\frac{s}{2(s+1)}}\lesssim k^{\frac{5 s+2}{2 s+2}}.
\]
Remember that $-\mathcal{W}(x)\leq \lambda_k$ in $\Omega _3$ so that
\[
I_3  \lesssim  x_k \lambda _k \normeinf{\phi_k} \lesssim x_k k^{\frac{2s	}{s+1}}k^{\frac{s}{2(s+1)}}
\lesssim k^{\frac{5 s+2}{2 s+2}}.
\]
Finally, ${\norme{\mathcal{W} \phi_k}}_{L^1(\R)} \lesssim k^{\frac{5 s+2}{2 s+2}}$.
\end{proof}

\section{Well-posedness and long time behaviour}\label{s:main}

In this section we show that the Cauchy problem \eqref{eq} has a unique smooth solution which is global in time. Keystones are the change of variable \eqref{vEnFonctionDeU} that links the non-local equation \eqref{eq} to a linear parabolic problem, and our previous estimates on the underlying eigenelements. Equipped with the representation \eqref{seriessolution} of the solution, we then prove convergence in any $L^p$, $1\leq p\leq +\infty$, to the principal eigenfunction normalized by its mass. 

Up to subtracting a constant to the confining fitness function $\mathcal W$, we can assume without loss of generality (recall the mass conservation property) that $\mathcal W\leq -1$.

\subsection{Functional framework}\label{ss:framework}
For $\mathcal{W}$ a negative confining fitness function (see Assumption \ref{ass:confining}), we set
\begin{equation*}
L_{-\mathcal{W}}^2(\R) := \left\{ v:\R\to \R, \Vert v\Vert _{L_{-\mathcal W}^2(\R)}^2:={\int_\R -\mathcal{W}(x) v^2(x) \,dx} < +\infty\right\}.
\end{equation*}
Recall that the Sobolev space $W^{1,2}(\R)$ is defined as \[W^{1,2}(\R)=\{f:\R\to\R, f\in L^2(\R), f'\in L^2(\R)\},\] where the derivative $f'$ is understood in the distributional sense. We denote by $V:=W^{1,2}(\R)\cap L^2_{-\mathcal{W}}(\R)$  the Hilbert space with inner product defined by
\begin{equation*}
{(v,\vdroit)}_V :=\int_\R \frac{dv}{dx}(x) \frac{d\vdroit}{dx}(x)\, dx +\int_\R -\mathcal{W}(x) v(x) \vdroit(x)\, dx,
\end{equation*}
and $H:=L^2(\R)$ with usual inner product \[{\left(v,\vdroit\right)}_H=\int_\R v(x) \vdroit(x)\, dx.\]
By Assumption \ref{ass:confining}, it is straightforward that $L^2_{-\mathcal{W}}(\R)\subset L^2(\R)$, so  that  $V\subset H$. Moreover, the following holds.

\begin{lem}\label{lem:inj-compacte}
The embedding $V \hookrightarrow H$ is dense, continuous and compact.
\end{lem}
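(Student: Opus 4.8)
The plan is to establish the three properties of the embedding $V \hookrightarrow H$ separately, with the compactness being the substantive point.

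\textbf{Density.} Since $\mathscr{C}^\infty_c(\R)$ is dense in $H = L^2(\R)$, it suffices to observe that $\mathscr{C}^\infty_c(\R) \subset V$: any smooth compactly supported function is clearly in $W^{1,2}(\R)$, and since $-\mathcal W$ is continuous (hence bounded on the compact support), it also lies in $L^2_{-\mathcal W}(\R)$. Density of $V$ in $H$ follows.

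\textbf{Continuity.} We use the normalization $\mathcal W \le -1$, so $-\mathcal W \ge 1$. Then for $v \in V$,
\[
\norme{v}_H^2 = \int_\R v^2(x)\,dx \le \int_\R -\mathcal W(x) v^2(x)\,dx \le \norme{v}_V^2,
\]
so the inclusion map has norm at most $1$; in particular it is continuous.

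\textbf{Compactness.} This is the main obstacle and the only part requiring real work. The plan is to take a bounded sequence $(v_n)$ in $V$, say $\norme{v_n}_V \le 1$, and extract a subsequence converging in $H$. First, by the Rellich--Kondrachov theorem, on any ball $\mathcal B_R$ the embedding $W^{1,2}(\mathcal B_R) \hookrightarrow L^2(\mathcal B_R)$ is compact; by a diagonal argument over $R = 1, 2, 3, \dots$ we extract a subsequence (still denoted $v_n$) that converges in $L^2(\mathcal B_R)$ for every $R$, to some limit $v$. It remains to upgrade this to convergence in $L^2(\R)$, and for that we use the confining potential to control the mass at infinity uniformly: since $\mathcal W$ is confining, for every $\varepsilon > 0$ there is $R_\varepsilon$ such that $-\mathcal W(x) \ge 1/\varepsilon$ for $\abs x \ge R_\varepsilon$, whence
\[
\int_{\abs x \ge R_\varepsilon} v_n^2(x)\,dx \le \varepsilon \int_{\abs x \ge R_\varepsilon} -\mathcal W(x) v_n^2(x)\,dx \le \varepsilon \norme{v_n}_V^2 \le \varepsilon,
\]
uniformly in $n$ (the same tail bound, via Fatou, holds for $v$). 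Splitting $\norme{v_n - v_m}_H^2$ into the integral over $\mathcal B_{R_\varepsilon}$ and its complement, the tail contributes at most $4\varepsilon$ uniformly, while the part over $\mathcal B_{R_\varepsilon}$ tends to $0$ as $n, m \to \infty$ by the local $L^2$ convergence. Hence $(v_n)$ is Cauchy in $H$, and since $\varepsilon$ is arbitrary it converges to $v$ in $H = L^2(\R)$. This proves compactness, and completes the lemma.
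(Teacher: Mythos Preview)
Your proof is correct. Density and continuity are handled exactly as in the paper. For compactness, however, you take a genuinely different route: the paper invokes the Riesz--Fr\'echet--Kolmogorov criterion, verifying the uniform tail smallness (via the confining weight, exactly as you do) together with uniform $L^2$-equicontinuity of the translates, the latter obtained from the pointwise H\"older bound $\abs{\vdroit_n(x+h)-\vdroit_n(x)}\le C\abs{h}^{1/2}\norme{\vdroit_n'}_{L^2}$ furnished by Morrey's theorem. You instead appeal to Rellich--Kondrachov on each ball $\mathcal B_R$, run a diagonal extraction, and then use the same tail bound to upgrade local $L^2$ convergence to global $L^2$ convergence via a Cauchy argument. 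Your approach is dimension-independent (it would work verbatim in $\R^d$), whereas the paper's Morrey step is specific to one dimension; on the other hand, the paper's argument avoids any extraction and gives relative compactness of the whole bounded set directly.
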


\begin{proof} This is very classical but, for the convenience of the reader, we present the details. Since $\mathscr{C}^{\infty}_{c} (\R) \subset V$ and $\mathscr{C}^{\infty}_{c} (\R)$ is dense in $L^2(\R)=H$, it follows that $V$ is dense in $L^2(\R)$. Next, since for $\vdroit\in V$, 
\[\int_\R \vdroit^2(x)\, dx \leq \frac{1}{- \sup \mathcal{W}} {\norme{\vdroit}}^2_V,\]
the embedding $V \hookrightarrow H$ is continuous. 

The proof of compactness follows by the Riesz-Fréchet-Kolmogorov theorem, see  e.g. \cite[Theorem 4.26]{Brezis}. Let ${(\vdroit_n)}_{n\geq 0}$ a bounded sequence of functions of $V$: there is $M>0$ such that, for all $n\geq 0$,
\begin{equation*}
\label{suite_bornee}
{\norme{\vdroit_n}}_V^{2}=\int_\R \vdroit_n '(x)^2\, dx + \int_\R -\mathcal{W}(x) \vdroit_n(x)^2\, dx < M.
\end{equation*}
We first need to show the uniform smallness of the tails of $\vdroit _n^{2}$. Let $\varepsilon>0$. Select  $a>0$ large enough so that $\frac{1}{-\mathcal{W}(x)}\leq \varepsilon$  for all $\vert x\vert \geq a$. Then
\begin{align*}
\norme{\vdroit_n}^2_{L^2(\R\setminus [-a,a])}  & =  \int_{-\infty}^{-a} \vdroit_n(x)^2\, dx + \int_a^{+\infty} \vdroit_n(x)^2\, dx \\
& \leq \varepsilon \int_{-\infty}^{-a} -\mathcal{W}(x)\vdroit_n(x)^2\, dx + \varepsilon \int_a^{+\infty} -\mathcal{W}(x)\vdroit_n(x)^2\, dx\\
&\leq  M \varepsilon.
\end{align*}
Next, for a compact set $K$, we need to show the uniform smallness of the $L^{2}(K)$ norm of $\vdroit _n(\cdot+h)-\vdroit _n$. Let $\varepsilon>0$.  By Morrey's theorem, there is $C>0$ such that, for all $h\in\R$ and $n\geq 0$,
\begin{equation*}
\abs{\vdroit_n(x+h)-\vdroit_n(x)}\leq C \abs{h}^{1/2} \normeLq{2}{\R}{\vdroit_n'}\leq C \abs{h}^{1/2} M^{1/2},
\end{equation*}
so that
\begin{equation*}
\norme{\vdroit_n(\cdot+h)-\vdroit_n}^2_{L^2(K)}   \leq  C^2 M \abs{K} \abs{h}\leq \varepsilon,
\end{equation*}
for all $n\geq 0$, if $\vert h\vert$ is sufficiently small. The lemma is proved.
\end{proof}

\subsection{Main results}\label{ss:main}

We first define the notion of solution to the Cauchy problem \eqref{eq}.

\begin{definition}[Admissible initial data]
We say that a function $u_0$ is an admissible  initial data if $u_0\in L^1(\R) \cap L^\infty(\R)$, $u_0(\cdot) \geq 0$ and $\textstyle \int_\R u_0(x)\, dx =1$.
\end{definition}

\begin{definition}[Solution of the Cauchy problem \eqref{eq}]\label{def:sol}
Let $u_0$ be an admissible initial data. We say that $u=u(t,x)$ is a (global) solution  of  the Cauchy problem \eqref{eq} if, for any $T>0$, $u\in \Ci{0}{0,T;H}\cap L^2(0,T;V)$, $u(0,\cdot)=u_0$, and 
\begin{enumerate}[label=(\roman*)]
\item For all $\vdroit\in V$, all $t\in(0,T]$,
\begin{equation*}
\begin{split}
& \frac{d}{dt} {\int_\R u(t,x)\vdroit(x)}\, dx  + \sigma^2 \int_\R \frac{\partial u}{\partial x} (t,x) \frac{d \vdroit}{d x}(x)\, dx\\
& +\int_\R -\mathcal{W}(x) u(t,x) \vdroit(x)\, dx + \overline{u}(t) \int_\R u(t,x) \vdroit(x) \, dx =0,
\end{split}
\end{equation*}
where the time derivative is understood in the distributional sense. Equivalently, for all $\vdroit\in V$, all $\varphi\in \mathscr{C}^1_{\rm c}(0,T)$,
\begin{equation}\label{WeakFormulation}
\begin{split}
&-\int_0^T  \left( {\int_\R u\vdroit}\, dx\right) \, \varphi'(t)\, dt  + \sigma^2 \int_0^T \left(\int_\R \frac{\partial u}{\partial x}  \frac{d \vdroit}{d x}\, dx \right) \varphi(t)\, dt  \\ & + \int_0^T \left( \int_\R -\mathcal{W} u\vdroit\, dx \right) \varphi(t)\, dt  + \int_0^T  \left( \int_\R u \vdroit \, dx\right)\overline{u}(t) \varphi(t) \, dt=0.
\end{split}
\end{equation}
\medskip 
\item $\overline{u}:t\mapsto \textstyle \int_\R \mathcal{W}(y) u(t,y)\, dy$ is a continuous function on $(0,T]$.
\medskip
\item  $C_T:=\textstyle \int_0^T \abs{\overline{u}(t)}\, dt < +\infty$.
\end{enumerate}
\end{definition}

\medskip

Here is our main mathematical result.

\begin{thm}[Solving replicator-mutator problem]\label{thm:main}
Let $\mathcal{W}$ satisfy Assumption \ref{ass:polynome}.  For any admissible initial condition $u_0$, there is a unique solution $u=u(t,x)$ to the Cauchy problem \eqref{eq}, in the sense of Definition \ref{def:sol}. Moreover the solution is smooth on $(0,+\infty)\times \R$  and is  given by
\begin{equation}\label{seriessolution}
u(t,x)=\frac{\displaystyle \sum_{k=0}^{+\infty} {\left(u_0,\phi_k \right)}_{L^2(\R)} \phi_k(x) e^{-\lambda_k t}}{\displaystyle \sum_{k=0}^{+\infty} {\left(u_0,\phi_k \right)}_{L^2(\R)} m_k e^{-\lambda_k t} }, \quad t>0,\, x\in \R,
\end{equation}
where $(\lambda _k,\phi _k)$ are the eigenelements defined in Proposition \ref{prop:eigen}, and \[m_k= \int _\R \phi_k(x)\,dx.\]
\end{thm}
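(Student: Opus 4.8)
The plan is to use the exponential change of unknown $v(t,x):=u(t,x)\,\umean$ (this is \eqref{vEnFonctionDeU}), which turns the nonlocal problem \eqref{eq} into the \emph{linear} parabolic equation $\partial_t v=\sigma^2\partial_x^2 v+\mathcal W(x)v=-\mathcal H v$ with $v(0,\cdot)=u_0$, where $\mathcal H$ is the operator of \eqref{def:H}. Then the solution is read off from the spectral decomposition of Proposition \ref{prop:eigen}, $v(t,\cdot)=e^{-t\mathcal H}u_0=\sum_{k\ge0}(u_0,\phi_k)_{L^2}\phi_k e^{-\lambda_k t}$, and, since mass is conserved, $u=v/\!\int_\R v$, which is exactly \eqref{seriessolution}.

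\textbf{Existence.} First I would \emph{define} $v(t,x):=\sum_{k\ge0}(u_0,\phi_k)_{L^2}\phi_k(x)e^{-\lambda_k t}$. Since $u_0\in L^1\cap L^\infty\subset L^2$, one has $|(u_0,\phi_k)_{L^2}|\le\|u_0\|_{L^2}$; combining this with the eigenfunction bounds of Propositions \ref{prop:mass-estimate}, \ref{prop:nomreinfiniefonctionspropres}, \ref{lem:L1normWphi} and the eigenvalue asymptotics of Proposition \ref{prop:asymptotic}, the factor $e^{-\lambda_k t}$ beats every polynomial in $k$ for $t$ bounded away from $0$. Hence the series, together with every series obtained by term-by-term differentiation (each derivative costs only polynomial factors $\lambda_k^j$) and by term-by-term integration in $x$ against $1$ and against $\mathcal W$ (legitimated by Propositions \ref{prop:mass-estimate} and \ref{lem:L1normWphi}), converges absolutely and uniformly on $[\tau,+\infty)\times\R$ for every $\tau>0$. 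Thus $v\in C^\infty((0,+\infty)\times\R)$ solves $\partial_t v=-\mathcal H v$ classically, $v(t,\cdot)\to u_0$ in $L^2$ as $t\to0^+$, and $v(t,\cdot),\partial_x v(t,\cdot),\mathcal W v(t,\cdot)\in L^1(\R)$. Using $\mathcal W\le-1<0$ (constants are supersolutions and $u_0\ge0$, $u_0\not\equiv0$), the strong maximum principle gives $0<v(t,x)\le\|u_0\|_{L^\infty}$. Set $I(t):=\int_\R v(t,x)\,dx=\sum_{k\ge0}(u_0,\phi_k)_{L^2}m_k e^{-\lambda_k t}>0$, which is smooth on $(0,+\infty)$ with $I'(t)=\int_\R\mathcal W v\,dx\le0$ (integrate the equation, the $\partial_x^2$ term vanishing by decay). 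Define $u:=v/I$; it is smooth and positive, has the form \eqref{seriessolution}, and dividing the equation for $v$ by $I$ while using $I'/I=\tfrac1I\int_\R\mathcal W v\,dx=\overline u(t)$ shows $u$ solves \eqref{eq} pointwise, hence \eqref{WeakFormulation} after testing against $\vdroit\in V$ and integrating by parts. The regularity $u\in\Ci{0}{0,T;H}\cap L^2(0,T;V)$ and item (ii) of Definition \ref{def:sol} follow from the series; for (iii), $I$ is non-increasing with $I(0^+)=\int_\R u_0=1$ (from $v(t,\cdot)\to u_0$ in $L^1$, a consequence of $L^2$-convergence, the uniform $L^\infty$-bound and tightness), so $\int_0^T|\overline u|\le\int_0^T(-I'/I)\le(\inf_{[0,T]}I)^{-1}(1-I(T))<+\infty$.

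\textbf{Uniqueness.} Let $u$ be any solution in the sense of Definition \ref{def:sol}. Condition (iii) makes $E(t):=\umean$ well-defined, absolutely continuous and positive, so $v:=uE$ makes sense; inserting $\vdroit\in V$, $\varphi\in\ftest$ in \eqref{WeakFormulation} shows $v$ is a weak solution of the \emph{linear} problem $\partial_t v=-\mathcal H v$, $v(0,\cdot)=u_0$, in $\Ci{0}{0,T;H}\cap L^2(0,T;V)$. This linear problem is well-posed by the classical Lions--Lax--Milgram/Galerkin theory (using Lemma \ref{lem:inj-compacte} and the coercivity on $V$ of the bilinear form $(v,w)\mapsto\sigma^2\int v'w'+\int(-\mathcal W)vw$, which is precisely $(v,w)_V$), so $v$ is unique and equals the series above. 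Finally $E$ solves the scalar Cauchy problem $E'(t)=E(t)\overline u(t)=\int_\R\mathcal W(y)v(t,y)\,dy$, $E(0)=1$, whose right-hand side is now entirely determined by $v$; hence $E$, and therefore $u=v/E$, is unique. This also forces $E=I$ and $\int_\R u(t,\cdot)\,dx=1$.

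\textbf{Main obstacle.} The delicate point is the rigorous treatment of the infinite series — absolute and uniform convergence of its spatial derivatives and of its integrals against $1$ and $\mathcal W$ — on which both the smoothness of $u$ and the identity $I'=\int_\R\mathcal W v$ rest; this is exactly what the non-standard estimates of Section \ref{s:spectral} on $\|\phi_k\|_{L^1}$, $\|\phi_k\|_{L^\infty}$ and $\|\mathcal W\phi_k\|_{L^1}$ were built for. A secondary technical issue is the behaviour as $t\to0^+$ (the $L^1$-convergence $v(t,\cdot)\to u_0$, positivity of $I$ up to its limit, and the control of $\int_0^T|\overline u|$), handled via the sign $\mathcal W<0$ which makes $I$ monotone.
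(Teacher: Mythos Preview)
Your proposal is correct and follows essentially the same route as the paper: the change of unknown $v=u\,\umean$ reduces \eqref{eq} to the linear problem \eqref{problemeLineairestar}, whose well-posedness (via Lions' theorem / spectral decomposition) together with the eigenfunction estimates of Section \ref{s:spectral} yields both uniqueness and the explicit series, and then $u=v/\int_\R v$ gives existence. The only cosmetic differences are the order (you do existence then uniqueness, the paper does necessary-then-sufficient), and that for item (iii) the paper obtains the exact value $\int_0^T|\overline u|=-\ln m_v(T)$ while you give an upper bound; your remark on tightness for $I(0^+)=1$ is in fact slightly more explicit than the paper's ``$m_v(0)=1$''.
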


\begin{proof}
We proceed by necessary and sufficient condition. Let $u$ be a solution, in the sense of  Definition \ref{def:sol}. We define the function $v$ as
\begin{equation}\label{vEnFonctionDeU}
v(t,x):= u(t,x) \exp{\left(\int_0^t \overline{u}(\tau)\, d\tau \right)}, \quad 0\leq t\leq T, \, x\in \R.
\end{equation}
This function is well defined since by Definition \ref{def:sol} $(iii)$, the integral in the exponential is finite for all  $t\in[0,T]$. Since $C_T<\infty$ and $u(t,\cdot)\in H\cap V$, it is straightforward to see that, for all $t\in(0,T]$, $v(t,\cdot)\in H\cap V\equiv V$. Additionally, from  $C_T<\infty$ and $u\in\Ci{0}{0,T;H}$, one can see  that $v\in \Ci{0}{0,T;H}$. Last, $v\in L^2(0,T;V)$ due to
\begin{align*}
\int_0^T {\norme{v(t,\cdot)}_V^2} \, dt  & = \int_0^T {\norme{u(t,\cdot)\umean}_V^2} \, dt \leq e^{2 C_T}  \int_0^T {\norme{u(t,\cdot)}_V^2}\, dt <  \infty,
\end{align*}
since $u\in L^2(0,T;H)$. 

\medskip

We  now show that $v$ solves the linear Cauchy problem 
\begin{equation}\label{problemeLineairestar}
\left\{\begin{array}{ll}
\partial_t v =  \sigma^2 \partial^2_x v + \mathcal{W}(x) v \vspace{5pt} \\
v(0,x) =u_0(x).
\end{array}
\right.\end{equation}
Indeed, formally for the moment, 
\begin{equation*}
\begin{split}
& \partial_t v = (\partial_t u ) \umean + u \overline{u}\\ 
&\partial^2_x v = (\partial^2_x u) \umean,
\end{split}
\end{equation*}
so that
\begin{equation*}
\label{testFunction}
\partial_t v - \sigma^2 \partial ^2_x v -\mathcal{W}(x) v =  (\partial_t u + u \overline{u} - \sigma^2 \partial^2_x u -\mathcal{W}(x) u) \umean =0
\end{equation*}
since $u$ solves \eqref{eq}. Those computations can be made rigorous in the  distributional sense. Indeed, for a test function $\psi\in \ftest$,  set
\begin{equation*}\label{etoilemarroquaine}
\varphi(t):=\psi(t) \umean,
\end{equation*}
and by  Definition \ref{def:sol} $(ii)$,   $\varphi$ belongs to $\mathscr{C}^1_{c}(0,T)$. Writing \eqref{WeakFormulation} with $\varphi$ as test function yields the weak formulation of \eqref{problemeLineairestar} with $\psi$ as test function, that is
\begin{equation*}
\begin{split}
&-\int_0^T  \left( {\int_\R v(t,x)\vdroit(x)}\, dx\right) \, \psi'(t)\, dt  + \sigma^2 \int_0^T \left(\int_\R \frac{\partial v}{\partial x} (t,x) \frac{d \vdroit}{d x}(x)\, dx \right) \psi(t)\, dt  \\ & + \int_0^T \left( \int_\R -\mathcal{W}(x) v(t,x) \vdroit(x)\, dx \right) \psi(t)\, dt =0,
\end{split}
\end{equation*} 
for all $\vdroit \in V$.

\medskip

The well-posedness of the linear Cauchy problem \eqref{problemeLineairestar} is postponed to the next subsection: from Proposition \ref{prop:existenceParabolicProblem}, we know that, for all $t\in(0,T]$,%
\begin{equation*}
v(t)=\sum_{k=0}^{+\infty} {\left(u_0,\phi_k \right)}_{L^2(\R)} \phi_k e^{-\lambda_k t} \quad \text{ in } L^{2}(\R).
\end{equation*}
Now, the estimates on the eigenvalues and the $L^\infty$ norm of eigenfunctions, namely Proposition \ref{prop:asymptotic} and Proposition  \ref{prop:nomreinfiniefonctionspropres}, allow to write
\begin{equation*}\label{v:ponctuel}
v(t,x) = \sum_{k=0}^{+\infty} {\left(u_0,\phi_k \right)}_{L^2(\R)} \phi_k(x) e^{-\lambda_k t}, \quad 0<t\leq T, \, x\in \R.
\end{equation*} 
Also, we know from the parabolic regularity theory and the comparison principle, that  $v\in \mathscr{C}^\infty((0,T)\times\R)$ and that $v(t,x)>0$ for all $t>0$, $x\in \R$.

Now, we show that the change of variable  \eqref{vEnFonctionDeU} can be inverted. For $t>0$, multiplying \eqref{vEnFonctionDeU} by $\mathcal{W}(x)$ and integrating over $x\in\R$, we get
\begin{align}
\overline v(t):=\int_\R \mathcal{W}(x) v(t,x)\, dx & = \overline{u}(t) \exp{\left(\int_0^t \overline{u}(\tau)\, d\tau \right)}  \nonumber \\
& = \frac{d}{dt} \left( \exp{\left(\int_0^t \overline{u}(\tau)\, d\tau \right)} \right).\label{eq1}
\end{align}
On the other hand, we claim that, for all $t>0$,
\begin{equation}
\label{eq2}
\frac{d}{dt}m_v(t)=\overline v(t),
\end{equation} 
which follows formally by integrating \eqref{problemeLineairestar} over $x\in \R$. To prove \eqref{eq2} rigorously, notice first that by Proposition \ref{prop:asymptotic} and \ref{prop:mass-estimate}, the series
\[
\sum_{k=0}^{+\infty} \abs{{\left(u_0,\phi_k \right)}_{L^2(\R)}} e^{-\lambda_k t} \int_{\R}\abs{\phi_k(x)}\, dx
\]
converges for all $t>0$. Hence $\textstyle m_v(t)$, the total mass of $v$, is given by \[ m_v(t) = \sum_{k=0}^{+\infty} {\left(u_0,\phi_k \right)}_{L^2(\R)} m_k e^{-\lambda_k t}.\]
Next, for any $t_0>0$ , $\sum_{k=0}^{+\infty} \abs{{\left(u_0,\phi_k \right)}_{L^2(\R)}} \abs{m_k } \lambda_k e^{-\lambda_k t_0}<+\infty$ thanks to Proposition  \ref{prop:asymptotic} and \ref{prop:mass-estimate}, so that $m_v$ is differentiable on $(0,T]$ and 
\[\frac{d}{dt }m_v (t) =  \sum_{k=0}^{+\infty} {\left(u_0,\phi_k \right)}_{L^2(\R)} m_k (-\lambda_k) e^{-\lambda_k t} =\overline{v}(t),\] the last equality following by similar arguments based on Proposition \ref{lem:L1normWphi}. Hence \eqref{eq2} is proved. From \eqref{eq1}, \eqref{eq2} and $m_v(0)=1$, we deduce that
\[
\exp{\left(\int_0^t \overline{u}(\tau)\, d\tau \right)}=m_v(t),
\]
for all $t\geq 0$. As a conclusion, \eqref{vEnFonctionDeU} is inverted into
\begin{equation}\label{uegalv}
u(t,x)=\frac{v(t,x)}{m_v(t)}, \quad m_v(t):= \int_\R v(t,x)\, dx>0,
\end{equation}
for all $0\leq t\leq T$, $x\in \R$.

\medskip

Conversely, we need to show that the function $u$ given by \eqref{uegalv} is  the solution of \eqref{eq} in the sense of Definition \ref{def:sol}. Let $T>0$.  

Since $\overline{u}(t)=\frac{\overline{v}(t)}{m_v(t)}=\frac{\frac{d}{dt}{m}_v(t)}{m_v(t)}$, the function $\overline{u}$ is continuous on $(0,T]$, which shows item $(ii)$ of Definition \ref{def:sol}.

Next, since $m_v>0$ and $\overline{v}<0$, 
\[\int_0^T \abs{\overline{u}(t)}\, dt  = -\int_0^T \frac{\frac{d}{dt}{m}_v(t)}{m_v(t)}\, dt=-\ln(m_v(T)) <+\infty,
\]
which shows item $(iii)$ of Definition \ref{def:sol}.

Last, since $v\in \Ci{0}{0,T;H}\cap L^2(0,T;V)$  and $0<m_v(T)\leq m_v(t)\leq 1$ for any $0\leq t\leq T$, then $u \in \Ci{0}{0,T;H}\cap L^2(0,T;V)$. For a test function $\varphi \in \ftest$,  set%
\begin{equation*}\label{etoilemarroquaine2}
\psi(t):=\varphi(t) \umeaninverse,
\end{equation*}
writing the weak formulation of \eqref{problemeLineairestar} with $\psi$ as test function, we see that $u$ given by \eqref{uegalv} satisfies the weak formulation \eqref{WeakFormulation} with $\varphi$ as test function, which shows item $(i)$ of Definition \ref{def:sol}. 

Theorem \ref{thm:main} is proved. 
\end{proof}

We are now in the position to understand the long time behaviour of the solution, of crucial importance for the biological interpretation (branching phenomena) in Section \ref{s:branching}.

\begin{cor}[Long time behaviour]\label{cor:long-time}
Let $\mathcal W$ satisfy Assumption \ref{ass:polynome}. Let $u_0$ be an admissible initial data $u_0$. Then the solution $u$ to the Cauchy problem \eqref{eq} converges, at large time, to the ground state $\phi _0$ divided by its mass $m_0:=\textstyle\int_\R \phi _0(x)\, dx$. Precisely, for any $1\leq p\leq +\infty$,
\[
u(t,\cdot)-\frac{\phi_0(\cdot)}{m_0}\longrightarrow 0 \quad \text{ in } L^p(\R), \quad \text{ as } t \to+\infty.
\]
\end{cor}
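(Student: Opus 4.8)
The plan is to exploit the explicit series representation \eqref{seriessolution}. Since $\mathcal W\le -1$ we have $\lambda_0<\lambda_1<\cdots$ with $\lambda_k\to+\infty$, and by simplicity of $\lambda_0$ the ground state $\phi_0$ has constant sign; since $u_0\ge 0$, $u_0\not\equiv0$, the coefficient $a_0:=(u_0,\phi_0)_{L^2(\R)}$ is nonzero (take $\phi_0>0$), so the leading term dominates. Concretely, write
\[
u(t,\cdot)-\frac{\phi_0}{m_0}
=\frac{\sum_{k\ge0}a_k\phi_k e^{-\lambda_kt}}{\sum_{k\ge0}a_k m_k e^{-\lambda_kt}}-\frac{\phi_0}{m_0}
=\frac{m_0\sum_{k\ge1}a_k\phi_k e^{-\lambda_kt}-\phi_0\sum_{k\ge1}a_k m_k e^{-\lambda_kt}}{m_0\sum_{k\ge0}a_k m_k e^{-\lambda_kt}},
\]
where $a_k:=(u_0,\phi_k)_{L^2(\R)}$. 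Factor $e^{-\lambda_0 t}$ out of numerator and denominator; the denominator becomes $m_0\big(a_0 m_0+\sum_{k\ge1}a_k m_k e^{-(\lambda_k-\lambda_0)t}\big)$, which tends to $m_0^2 a_0\ne0$ as $t\to\infty$ (using $|a_k|\le\|u_0\|_{L^2}$ by Cauchy–Schwarz, $|m_k|\lesssim k^{1/(2(s+1))}$ from Proposition \ref{prop:mass-estimate}, and $\lambda_k\sim C_{s,\sigma}k^{2s/(s+1)}$ from Proposition \ref{prop:asymptotic}, so the tail series converges and goes to $0$). Hence it suffices to show the numerator, namely
\[
N(t,\cdot):=m_0\sum_{k\ge1}a_k\phi_k e^{-(\lambda_k-\lambda_0)t}-\phi_0\sum_{k\ge1}a_k m_k e^{-(\lambda_k-\lambda_0)t},
\]
tends to $0$ in $L^p(\R)$ for every $1\le p\le\infty$.

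For the $L^1$ and $L^\infty$ endpoints I would bound $N(t,\cdot)$ termwise: using $|a_k|\le\|u_0\|_{L^2(\R)}$, $\|\phi_k\|_{L^1(\R)}\lesssim k^{1/(2(s+1))}$, $\|\phi_k\|_{L^\infty(\R)}\lesssim k^{s/(2(s+1))}$, $|m_k|\lesssim k^{1/(2(s+1))}$, and the spectral gap $\lambda_k-\lambda_0\ge \lambda_1-\lambda_0>0$ together with $\lambda_k-\lambda_0\to\infty$, one gets for any $t\ge t_0>0$
\[
\|N(t,\cdot)\|_{L^q(\R)}\lesssim \sum_{k\ge1} k^{\beta_q}\,e^{-(\lambda_k-\lambda_0)t},\qquad q\in\{1,\infty\},
\]
for a suitable polynomial weight $k^{\beta_q}$; this series converges for every $t>0$ (by the eigenvalue asymptotics) and, by dominated convergence for series, tends to $0$ as $t\to\infty$ since every term does and the $t=t_0$ series is a convergent majorant. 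The $L^p$ case for $1<p<\infty$ then follows by interpolation, $\|N(t,\cdot)\|_{L^p}\le\|N(t,\cdot)\|_{L^1}^{1/p}\|N(t,\cdot)\|_{L^\infty}^{1-1/p}$, or directly by the same termwise estimate using $\|\phi_k\|_{L^p}\le\|\phi_k\|_{L^1}^{1/p}\|\phi_k\|_{L^\infty}^{1-1/p}$. Finally, dividing by the denominator, which is bounded away from $0$ and from $\infty$ for large $t$, yields the claimed convergence.

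The main obstacle is purely a matter of bookkeeping: justifying that the tail series can be differentiated/estimated uniformly and that the termwise bounds combine into a convergent majorant independent of $t\ge t_0$, so that one may pass to the limit inside the (infinite) sum. This is exactly where Propositions \ref{prop:asymptotic}, \ref{prop:mass-estimate} and \ref{prop:nomreinfiniefonctionspropres} do the work: the polynomial growth of $\|\phi_k\|_{L^1}$, $\|\phi_k\|_{L^\infty}$, $|m_k|$ is dwarfed by the super-polynomial decay $e^{-(\lambda_k-\lambda_0)t}$ coming from $\lambda_k\sim C_{s,\sigma}k^{2s/(s+1)}$, so every series in sight converges locally uniformly on $t>0$ and each individual term vanishes as $t\to\infty$. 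One should also record at the outset that $a_0\ne0$: since $\phi_0$ has constant sign (simplicity of $\lambda_0$, Proposition \ref{prop:asymptotic}, and the classical fact that the ground state does not vanish) and $u_0\ge0$ with $\int_\R u_0=1$, we have $a_0=\int_\R u_0\phi_0\ne0$, which is what makes the denominator nondegenerate and legitimizes factoring out $e^{-\lambda_0 t}$.
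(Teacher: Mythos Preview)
Your proof is correct and follows essentially the same route as the paper: factor $e^{-\lambda_0 t}$ out of the series \eqref{seriessolution}, use $a_0\neq 0$ (positivity of $\phi_0$ and of $u_0$), control the tail series via the eigenvalue asymptotics (Proposition~\ref{prop:asymptotic}) together with the $L^1$ and $L^\infty$ bounds on the eigenfunctions (Propositions~\ref{prop:mass-estimate} and~\ref{prop:nomreinfiniefonctionspropres}), apply dominated convergence, and interpolate for $1<p<\infty$. The only cosmetic difference is that you form the difference $u-\phi_0/m_0$ directly and show its numerator tends to zero, whereas the paper shows numerator $\to\phi_0$ and denominator $\to m_0$ separately; the substance is identical.
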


\begin{proof} We denote $a_k:=(u_0,\phi _k)_{L^{2}(\R)}$ and observe that 
$a_0>0$ since $\phi_0>0$ and $u_0\geq 0$, $u_0\not \equiv 0$. Thus, from \eqref{seriessolution} we have
\[
u(t,x)  = \frac{\displaystyle \phi_0(x)+\frac{1}{a_0}\sum_{k=1}^{+\infty} a_k \phi_k(x) e^{-(\lambda _k-\lambda _0) t} }{\displaystyle m_0+\frac{1}{a_0}\sum_{k=1}^{+\infty} a_k m_k e^{-(\lambda _k-\lambda_0) t}}.
\]
Recall that $\lambda_0< \lambda_1 \leq \lambda_k$ for all $k\in\N^*$ and that we are equipped with the asymptotics of Proposition \ref{prop:asymptotic}. Hence, by Proposition \ref{prop:mass-estimate} and the dominated convergence theorem, the  denominator tends to $m_0$ as $t\to +\infty$.  Similarly, by Proposition \ref{prop:nomreinfiniefonctionspropres}, Proposition \ref{prop:mass-estimate} respectively, the numerator tends to $\phi_0$ in $L^{\infty}(\R)$, $L^{1}(\R)$ respectively, as $t\to +\infty$. For $1<p<+\infty$, the result follows by interpolation.
\end{proof}

In particular, Corollary \ref{cor:long-time} implies that, whatever the number of maxima of the initial date $u_0$, the long time shape is determined by that of the ground state $\phi _0$.  We illustrate this property with numerical simulations in Section \ref{s:branching}.

Note that Corollary \ref{cor:long-time} is an extension of  the long time convergence result proved in \cite{AlfaroCarles14}, for to the particular case of a quadratic fitness, $\mathcal{W}(x)=-x^2$, for which it is well known that the  principal eigenfunction is a Gaussian.

\subsection{Linear parabolic equation}\label{ss:linearParabolicEqn} For the convenience of the reader, we recall here how to deal with the linear Cauchy problem \eqref{problemeLineairestar}.

\begin{prop}[The linear problem]\label{prop:existenceParabolicProblem}
For any $u_0\in \Lq{1}{\R}\cap\Linf{\R}$, for any $T>0$, the problem \eqref{problemeLineairestar} posses a unique weak solution $v=v(t,x)$, in the sense that $v\in \Ci{0}{0,T;H}\cap L^2(0,T;V)$, $v(0,\cdot)=u_0$ and for all $\vdroit\in V$, all $t\in(0,T]$, 
\[ \frac{d}{dt} \int_\R v(t,x) \vdroit(x)\, dx + \sigma^2 \int_\R \partial_x v(t,x) \partial_x \vdroit(x) +\int_\R -\mathcal{W}(x) v(x) \vdroit(x)\, dx = 0, \]
where the time derivative is understood in the distributional sense. Furthermore, \begin{equation*}\label{solutionSeries}
v(t)=\sum_{k=0}^{+\infty} {\left( u_0, \phi_k\right)}_{L^2(\R)} \phi_ke^{-\lambda_k t}
 \end{equation*}
and the convergence of the sequence of partial sums is uniform in time. 
\end{prop}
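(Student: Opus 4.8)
The plan is to treat \eqref{problemeLineairestar} by the classical Faedo--Galerkin / Hilbert-triple method, exploiting the compact embedding $V\hookrightarrow H$ established in Lemma \ref{lem:inj-compacte} and the spectral basis of Proposition \ref{prop:eigen}. First I would introduce the bilinear form $a(v,\mathsf{v}):=\sigma^2\int_\R v'\mathsf{v}' + \int_\R -\mathcal W v\mathsf{v}$, which is exactly $(\cdot,\cdot)_V$ since $-\mathcal W\geq 1$; it is continuous and coercive on $V$ (indeed $a(v,v)=\norme{v}_V^2$), and $\mathcal W\in L^1_{\rm loc}$ together with Assumption \ref{ass:polynome} makes everything well defined. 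Standard parabolic theory (Lions--Magnes, or \cite[Ch.\ X]{Brezis}) then gives, for $u_0\in H$, a unique $v\in \Ci{0}{0,T;H}\cap L^2(0,T;V)$ with $\partial_t v\in L^2(0,T;V')$ solving the weak formulation, and moreover $v\in\Ci{0}{(0,T];V}$ with the energy identity $\tfrac12\tfrac{d}{dt}\norme{v(t)}_H^2 + a(v(t),v(t))=0$. Uniqueness is immediate from coercivity: the difference $w$ of two solutions satisfies $\tfrac12\tfrac{d}{dt}\norme{w}_H^2 = -\norme{w}_V^2\leq 0$ with $w(0)=0$.

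Next I would derive the series representation. Since $\{\phi_k\}$ is an orthonormal basis of $H$ consisting of eigenfunctions of $\mathcal H$ with $\mathcal H=a$-associated operator, one expands $v(t)=\sum_k c_k(t)\phi_k$; testing the weak formulation against $\phi_k\in V$ gives the decoupled ODEs $c_k'(t)=-\lambda_k c_k(t)$, $c_k(0)=(u_0,\phi_k)_{L^2(\R)}=:a_k$, hence $c_k(t)=a_k e^{-\lambda_k t}$, which yields the stated formula in $L^2(\R)$. Bessel's inequality gives $\sum_k a_k^2=\norme{u_0}_{L^2}^2<\infty$ (note $u_0\in L^1\cap L^\infty\subset L^2$), and for $t\geq t_0>0$ the tail $\sum_{k\geq N} a_k^2 e^{-2\lambda_k t}\leq e^{-2\lambda_N t_0}\sum_k a_k^2\to 0$ using $\lambda_k\to+\infty$ from Proposition \ref{prop:eigen}; combined with the continuity in $H$ down to $t=0$ (where the partial sums converge to $u_0$ in $H$ by Parseval), this gives that the partial sums converge to $v$ uniformly in time on $[0,T]$ in the $H$-norm.

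The only slightly delicate point is matching the initial datum in $\Ci{0}{0,T;H}$: one must check $v(t)\to u_0$ in $H$ as $t\to 0^+$, which follows from $\norme{v(t)-u_0}_H^2=\sum_k a_k^2(e^{-\lambda_k t}-1)^2$ and dominated convergence for series (the summand is bounded by $a_k^2$ and tends to $0$ pointwise in $k$). I expect no real obstacle here: the statement is entirely standard once Lemma \ref{lem:inj-compacte} and Proposition \ref{prop:eigen} are in hand, the coercivity being free because of the normalization $\mathcal W\leq -1$. The smoothness and positivity of $v$ on $(0,T)\times\R$, used elsewhere in the proof of Theorem \ref{thm:main}, are quoted from interior parabolic regularity and the strong maximum principle rather than reproved here, since $\mathcal W$ is smooth by Assumption \ref{ass:polynome} and the equation is uniformly parabolic.
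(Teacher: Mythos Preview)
Your approach is essentially the paper's: verify that the bilinear form $a$ is continuous and coercive on $V$, invoke Lemma~\ref{lem:inj-compacte} for the compact dense embedding $V\hookrightarrow H$, and apply Lions' theorem (stated as Theorem~\ref{th:Lions} in the paper, which already packages the series representation and its uniform-in-time convergence). One small slip: $a$ is not literally $(\cdot,\cdot)_V$ because of the $\sigma^2$ factor on the gradient term, so $a(v,v)\geq \min(\sigma^2,1)\norme{v}_V^2$ rather than equality---but this does not affect your argument.
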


\begin{proof} The form $\mathbf{a}:{V\times V}\to {\R}$ 
 \[
 \mathbf{a}(v,\vdroit):={\sigma^2 \int_\R \frac{dv(x)}{dx} \frac{d\vdroit(x)}{dx} +\int_\R -\mathcal{W}(x) v(x) \vdroit(x)\, dx}
 \]
 is   symmetric and  bilinear. It is continuous since, for all $v,\vdroit\in V$,
\begin{align*}
\abs{\mathbf a(v,\vdroit)} & \leq \sigma^2 \abs{\int_\R v'(x) \vdroit'(x)\, dx }+\abs{\int -\mathcal{W}(x) v(x) \vdroit(x)\, dx}\\
& \leq  \sigma^2 \normeLq{2}{\R}{v'}\normeLq{2}{\R}{\vdroit'}+ {\norme{v}_{L^2_{-\mathcal{W}}(\R)}} {\norme{\vdroit}_{L^2_{-\mathcal{W}}(\R)}}\\
& \leq ( \sigma^2 +1) \norme{v}_V \norme{\vdroit}_V.
\end{align*}
It is coercive since, for all $\vdroit\in V$,
\begin{align*}
\mathbf a(\vdroit,\vdroit) & =  \sigma^2 \int_\R (\vdroit'(x))^2\, dx +\int_\R -\mathcal{W}(x) \vdroit^2(x)\, dx\\
& \geq \min( \sigma^2,1) {\norme{\vdroit}}_V^2.
\end{align*}
The conclusion then follows from Lemma \ref{lem:inj-compacte}
 and Lions' Theorem  for parabolic equations. \end{proof}

We state Lions' theorem covering parabolic Cauchy problems of the form 
\begin{equation*}
\left\{\begin{array}{ll}
\partial_t v =  \sigma^2 \partial^2_x v + \mathcal{W}(x) v +f(t,x)\vspace{5pt} \\
v(0,x) =u_0(x).
\end{array}
\right.\end{equation*}

\begin{thm}[Lions' theorem, see  \cite{lions_problemes_1968} or \cite{rakotoson}]\label{th:Lions}
Let $V$ be a separable Hilbert space with inner product ${(\cdot ,\cdot)}_V$  and norm ${\norme{\cdot}}_V$. Let $H$ be a Hilbert space with inner product ${(\cdot,\cdot)}_H$ and norm ${\norme{\cdot}}_H$, such that  $H\simeq H'$. Assume that the  embedding $V \hookrightarrow H$ is dense, continuous and compact. Let $\mathbf{a}:V\times V\rightarrow \R$ be a symmetric, continuous and coercive bilinear form. Let $T>0$ and  $f \in L^2(0,T;H)$ be given. Let $u_0\in H$ be given.

Then, there is a unique function $v\in \Ci{0}{0,T;H}\cap L^2(0,T;V)$ such that, for all $\vdroit\in V$,
\[\left\{ 
\begin{array}{ll}
 \frac{d}{dt} {(v(t),\vdroit)}_H+\mathbf{a}(v(t),\vdroit)={\langle f(t),\vdroit\rangle}_{V',V} \quad {\rm in} \ \mathscr{D}'(0,T) \vspace{5pt}\\
 v(0)=u_0.
\end{array}
\right.
\]
Moreover, for all $t\in [0,T]$, $v$ is written as the Hilbertian sum
\[
 v(t)=\sum_{j=0}^{+\infty} g_j(t) \phi_j,
\]
 where $(\phi_j)_{j\geq 0}$ is the spectral basis of $H$ defined by $\mathbf a (\phi_j,\vdroit)=\lambda_j(\phi _j,\vdroit)$ for all $\vdroit \in V$,
\[
g_j(t):={(u_0,\phi_j)}_H e^{-\lambda_j t}+\int_0^t f_j(s) e^{-\lambda_j (t-s)}\, ds, \quad f_j(s):={(f(s),\phi_j)}_H.
\]
Also, the sequence  $v_n(t):= \sum_{j=0}^n g_j(t)\phi_j$ uniformly converges to $v(t)$, that is
\[
\sup_{0\leq t\leq T}\Vert v_n(t)-v(t)\Vert _H \to  0, \quad \text{ as } n\to +\infty.\]
\end{thm}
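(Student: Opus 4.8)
The plan is to exploit the symmetry of $\mathbf{a}$ together with the compactness of $V\hookrightarrow H$ to run a Galerkin scheme in the eigenbasis of $\mathbf{a}$, which decouples the parabolic problem into a countable family of scalar linear ODEs solved explicitly by Duhamel's formula. First I would construct the spectral basis. By coercivity, $\mathbf{a}$ is an inner product on $V$ equivalent to $(\cdot,\cdot)_V$, so Lax--Milgram produces, for each $f\in H$, a unique $Tf\in V$ with $\mathbf{a}(Tf,w)=(f,w)_H$ for all $w\in V$. Composing the bounded map $f\mapsto Tf\in V$ with the compact embedding $V\hookrightarrow H$ shows that $T:H\to H$ is compact, self-adjoint (by symmetry of $\mathbf{a}$) and injective with positive values; the spectral theorem for compact self-adjoint operators then yields an orthonormal basis $(\phi_j)_{j\ge0}$ of $H$, with each $\phi_j\in V$, and positive eigenvalues $\lambda_j\to+\infty$ such that $\mathbf{a}(\phi_j,w)=\lambda_j(\phi_j,w)_H$ for every $w\in V$. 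In particular $(\phi_j/\sqrt{\lambda_j})_{j\ge 0}$ is orthonormal in $(V,\mathbf{a})$, hence $\bigcup_n V_n$ with $V_n:=\mathrm{span}(\phi_0,\dots,\phi_n)$ is dense in $V$, and $\lambda_0=\inf_j\lambda_j>0$.

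Next I would solve and estimate the Galerkin approximations. Setting $f_j(t):=(f(t),\phi_j)_H$, Parseval gives $\sum_j\int_0^T f_j(t)^2\,dt=\norme{f}_{L^2(0,T;H)}^2<+\infty$, and $v_n(t):=\sum_{j=0}^n g_j(t)\phi_j$ solves the projected problem precisely when each $g_j$ solves $g_j'+\lambda_j g_j=f_j$ with $g_j(0)=(u_0,\phi_j)_H$, whose unique solution is the stated Duhamel formula. Multiplying the $j$-th ODE by $g_j$ and summing yields the energy identity $\tfrac{1}{2}\tfrac{d}{dt}\norme{v_n}_H^2+\mathbf{a}(v_n,v_n)=(f,v_n)_H$; combined with Cauchy--Schwarz and coercivity this gives $n$-uniform bounds for $v_n$ in $L^\infty(0,T;H)$ and in $L^2(0,T;V)$ controlled by $\norme{u_0}_H$ and $\norme{f}_{L^2(0,T;H)}$. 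Applying the same identity to the tail sums $\sum_{m<j\le n}g_j\phi_j$, and using that $\sum_{j>m}(u_0,\phi_j)_H^2\to0$, that $\sum_{j>m}\int_0^T f_j(t)^2\,dt\to0$, and that $\lambda_j\ge\lambda_0>0$, shows that $(v_n)_n$ is Cauchy in $\Ci{0}{0,T;H}$ and in $L^2(0,T;V)$. Its limit $v$ is precisely the Hilbertian sum $\sum_j g_j(t)\phi_j$, the partial sums converge uniformly in $t$, and $v(0)=u_0$ since $v_n(0)\to u_0$ in $H$.

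Finally I would identify $v$ as the unique solution. Testing the weak formulation against $w=\phi_j$ reduces it to $g_j'+\lambda_j g_j=f_j$ in $\mathscr{D}'(0,T)$, which holds by construction; by linearity, density of $\bigcup_n V_n$ in $V$, and the continuity of $\mathbf{a}$ and of $t\mapsto(v(t),w)_H$, the identity extends to all $w\in V$, so $v$ is a solution in the stated sense. For uniqueness, any solution $\tilde v$ has coefficients $t\mapsto(\tilde v(t),\phi_j)_H$ solving the same scalar Cauchy problem (the initial datum being read off from $\tilde v\in\Ci{0}{0,T;H}$), hence equal to $g_j$; since $(\phi_j)_j$ is a basis of $H$, $\tilde v=v$. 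The one genuinely nontrivial ingredient is the compactness of $V\hookrightarrow H$, used to diagonalize $\mathbf{a}$; the most delicate book-keeping will be the tail estimates in the $L^2(0,T;V)$ norm, which rest on $\inf_j\lambda_j>0$, whereas the density step transferring the weak formulation from the $\phi_j$ to arbitrary $w\in V$ is routine.
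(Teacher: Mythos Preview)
The paper does not prove this theorem; it is stated as a quoted result with references to Lions' book and to Rakotoson, and no proof is given in the paper itself. Your sketch is the standard Galerkin argument in the spectral basis, and it is correct: the compact embedding plus symmetry and coercivity of $\mathbf{a}$ yield the spectral decomposition, the decoupled scalar ODEs are solved by Duhamel, the energy identity gives uniform bounds and Cauchy tails in both $\Ci{0}{0,T;H}$ and $L^2(0,T;V)$, and uniqueness follows by testing against the eigenbasis. This is essentially the proof one finds in the cited references, so there is nothing to compare.
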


\section{Branching or not}\label{s:branching}

\emph{Evolutionary branching}  is a corner stone in the theory of evolutionary genetics  \cite{book:832770}, \cite{book:1324333}. It consists in the splitting from uni-modal to multi-modal distribution of the phenotypic trait.  
By Corollary \ref{cor:long-time}, if the principal eigenfunction $\phi_0$ has two or more maxima, it follows that for any uni-modal initial condition $u_0$, the solution will split to multi-modal distribution in the limit $t\to +\infty$. From a biological point of view, the fitness function $\mathcal{W}$ is the key element for branching to occur. However, the mutation rate $\sigma$ is another main parameter involved in the branching process. Indeed, if $\sigma$ is large, then the population distribution tends to homogenize. As a consequence, a too large value of $\sigma$ may enforce uni-modality of 
the principal eigenfunction $\phi_0$ and thus of the solution $u(t,\cdot)$ as $t$ tends to $+\infty$. In this section, we enquire on the conjunct influence of $\mathcal W$ and of $\sigma$ on the shape of the principal eigenfunction $\phi _0$. This is far from being straightforward, and we therefore combine some rigorous results and  numerical explorations. We start with some particular cases of analytic ground states.

\subsection{Some explicit ground states}\label{ss:explicit} The search for eigenvalues and  eigenfunctions of  Schrödinger operators has long been  motivated by applications in physics and chemistry. Some closed-form formulas of eigenfunctions, in particular the ground state, for specific potentials are available in the literature. For example in  \cite{Brandon2013}, the authors get for $\sigma=1$, 
\begin{equation*}
\begin{cases}
-\mathcal{W}(x) =x^{10}-x^8+x^6-\tfrac{43 }{8}x^4+\tfrac{105}{64}x^2\, \\
 \phi_0(x)  =A\exp\left(-\frac 3{16}x^{2}+\frac 1 8 x^{4}-\frac 1 6 x^{6}\right)\\
 \lambda_0 =\tfrac{3}{8},
\end{cases}
\end{equation*}
with $A>0$ a normalization constant. In this case the potential $-\mathcal W$ is symmetric and double-well shaped, but the ground state is uni-modal because of a too large $\sigma$.

Xie, Wang and Fu \cite{XieWangFu} provide exact solutions for a class of rational potentials using the confluent Heun functions: for any $\omega>0$, $g>0$ and $V_2<g$, they obtain
\begin{equation*}
\left\{
\begin{array}{ll}
  -\mathcal{W}(x) =\frac{\omega ^2}{4}x^{2}+\frac{g (g-V_2)+g \omega +\sqrt{g (g-V_2)} (g+\omega )}{g \left(1+g x^2\right)}+\frac{V_2}{\left(1+g x^2\right)^2}\\
\phi_0(x) =A\exp\left(-\frac{ \omega }{4}x^2+\tfrac{g+\sqrt{g (g-{V_2})}}{2 g} \ln\left( 1+g x^2\right) \right)\\ 
 \lambda_0 =\left(\tfrac{\sqrt{g (g-{V_2})}}{g}+\tfrac{3}{2}\right) \omega\\
  \sigma=1.
\end{array}
\right.
\end{equation*}
Here, the potential is symmetric. At least for some parameters, see \cite{XieWangFu},  both the potential and the ground state are double-well shaped (branching occurs).

In \cite{Zaslavski}, authors give some explicit formulas for potentials defined by trigonometric hyperbolic functions: for any $B>0$ and $C\geq 0$, they obtain 
\begin{equation*}
\left\{
\begin{array}{ll}
  -\mathcal{W}(x) =\tfrac{B^2}{4}  \left(\sinh (x)-\tfrac{C}{B}\right)^2-B \cosh (x) \vspace*{5pt}\\
  \phi_0(x) =A (e^{\tfrac{x}{2}}-\tfrac{1}{B}(C-\sqrt{B^2+{C}^2}) {e^{-\tfrac{x}{2}}
   }
   ) e^{\frac{C}{2}x-\frac{B}{2} 
   \cosh (x)}\vspace{5pt}\\
 \lambda_0 =-\frac{1}{2} \sqrt{B^2+C^2}-\frac{1}{4}\\
 \sigma=1.
\end{array}
\right.
\end{equation*}
In particular, when $C=0$ the potential is symmetric and is a double-well for $0<B<2$ but a single-well when $B>2$; on the other hand the ground state has then two local maxima for $0<B<1/2$ but only one when $B> 1/2$. If we slightly increase $C>0$, thus breaking the symmetry of the potential, and keep $B>0$ small, we see that a second local maximum appears in the ground state. This shows that the shape of the ground state is very sensitive to the symmetry or not of the potential. 

To conclude this subsection, let us observe that the ansatz $\phi_0(x):=e^{-q(x)}$ is positive and satisfies $-\phi _0''-(q''(x)-(q'(x))^2)\phi_0=0$. It is therefore the ground state associated with $\sigma=1$, $\mathcal W(x)=q''(x)-(q'(x))^{2}$, $\lambda_0=0$. This provides a way to construct many examples.

\subsection{Obstacles to branching}\label{ss:no-branching} As already mentioned above, a too large $\sigma$ prevents the branching phenomenon. 

Next, if the fitness $\mathcal W$  is concave it is known \cite[Theorem 6.1]{Bra-Lie-76}, see also \cite{Hel-Sjo-94}, that the ground state  is log-concave and therefore uni-modal. For instance, the harmonic potential $-\mathcal{W}(x)=x^2$ has the ground state  $\phi_0(x)=\tfrac{1}{\sqrt{\pi \sigma}}\exp\left({-\tfrac{x^2}{2 \sigma }}\right)$ which is log-concave.

Slightly more generally, if the fitness has a unique global maximum, it is expected that, whatever the values of $\sigma$, the ground state remains uni-modal, see Figure \ref{fig:uni-modal} for numerical simulations.

\begin{figure}[hbt!]
\begin{center}
\includegraphics[width=12.5cm]{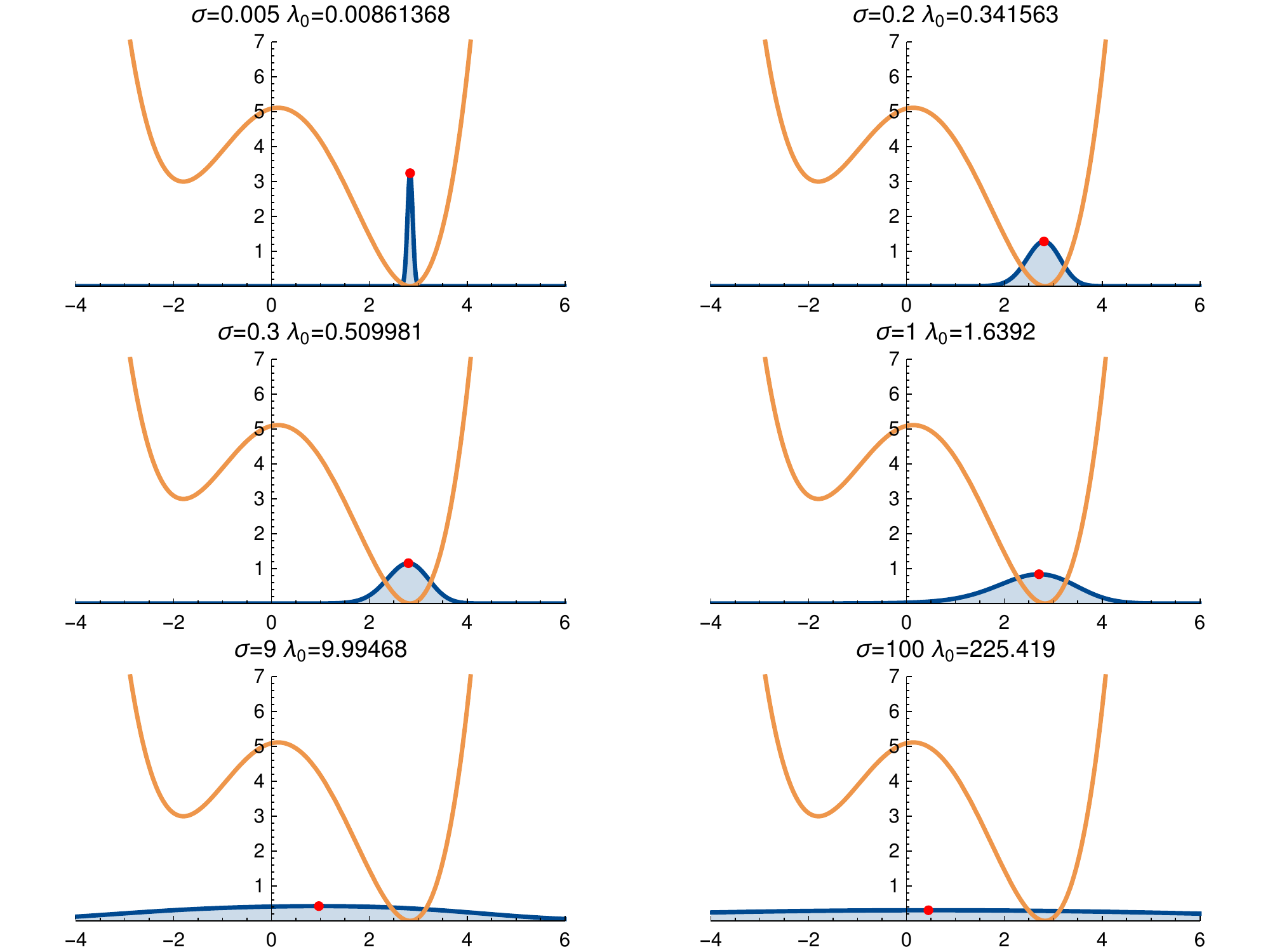}
\caption{Ground state $\phi_0(\sigma)$, increasing parameter $\sigma $, in the case $-\mathcal{W}(x)=\tfrac{299}{2520}x^{4}-\tfrac{233}{1260}x^{3}-\tfrac{2971}{2520}x^{2}+\tfrac{139}{420}x+{\rm constant}$. The ground state  remains uni-modal, and the global maximum is shifted towards left, asymptotically reaching 0.}
\label{fig:uni-modal}
\end{center}
\end{figure}

\subsection{The typical situation leading to branching}\label{ss:prototype} In order to obtain branching, the above considerations drive us to consider a fitness function $\mathcal W$ reaching multiple times its global maximum combined with a small enough parameter $\sigma>0$. Hence, in the particular case of a double-well potential $-\mathcal W$,  it is proved in \cite[Theorem~2.1]{SimonAsymptoticEigenvalue} that, far from the minima of the potential (in particular between the two wells), the ground state $\phi_0=\phi_0(\sigma)$ is exponentially small as $\sigma\to0^+$, which indicates that branching occurs.

Nevertheless,  one can come to a similar conclusion through direct arguments under the assumption that the fitness function $\mathcal W$ is even, satisfies  Assumption \ref{ass:polynome} and  $\mathcal{W}(0)<\max \mathcal{W}$. Indeed, since $\mathcal{W}$ is even, so is the ground state and therefore $\phi _ {0}'(0)=0$. Next, testing the equation at $x=0$, we get
\[
\sigma ^2\phi_0''(0)=-(\mathcal{W}(0)+\lambda _0)\phi_0(0).
\]
We know from Lemma \ref{prop:sigma-pt} that $\lambda _0=\lambda_0(\sigma)\to -\max \mathcal{W}$ as $\sigma \to 0$, and therefore, for $\sigma$ sufficiently small, $\phi _0''(0)>0$. This shows that the ground state is at least bi-modal. For instance, in Figure \ref{fig:hbartozero} we show $-\mathcal{W}(x):=\frac 1{12}(x^{2}-2)^{2}$ and the associated principal eigenfunction $\phi_0=\phi_0(\sigma)$, for different values of the parameter $\sigma$. 

\begin{figure}[hbt!]
\begin{flushleft}
\includegraphics[width=12.5cm]{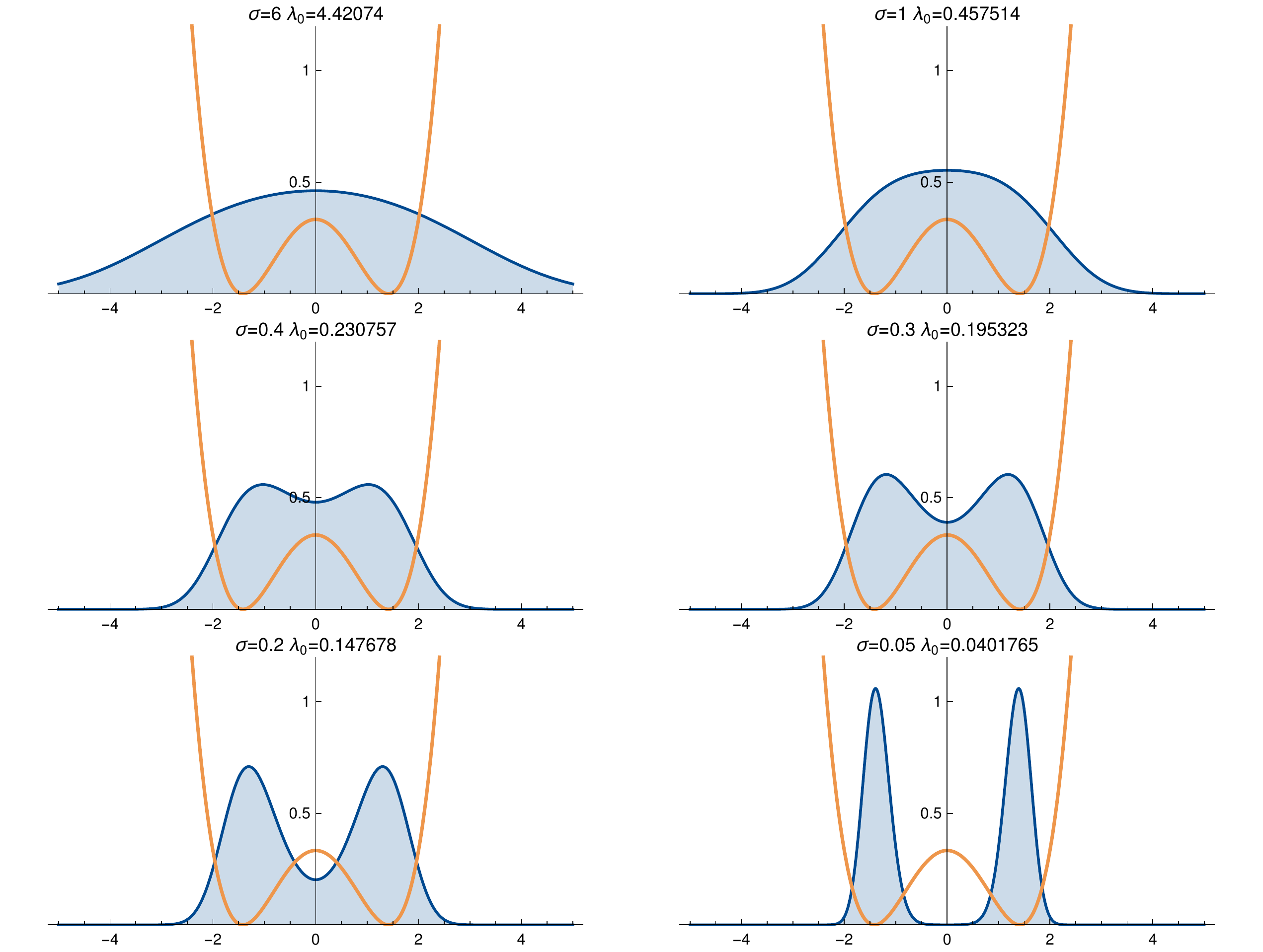}
\caption{Ground state $\phi_0(\sigma)$, decreasing parameter $\sigma $, in the case $-\mathcal{W}(x)=\tfrac 1{12}(x^{2}-2)^{2}$. Top left: for large $\sigma$ the ground state is uni-modal. Bottom right: for small $\sigma$ the ground state tends to concentrate into the wells of $-\mathcal W$.}
\label{fig:hbartozero}
\end{flushleft}
\end{figure}

As far as the Cauchy problem \eqref{eq} is concerned, we present here 
some numerical simulations where one can observe the branching phenomenon. For this example we choose  the double-well fitness function
\begin{equation}
\label{fitness-evolution}
-\mathcal{W}(x)=\left(x^2-4\right) x^2+4,
\end{equation}
and $\sigma=10^{-3}$, which is sufficiently small to ensure that $\phi_0$ is bi-modal. To numerically compute the solution $u(t,x)$ to \eqref{eq} we follow the proof of Theorem \ref{thm:main}: using finite element method we  first compute a numerical approximation $v_{\rm num}(t,x)$ of the solution $v(t,x)$ to the linear Cauchy problem \eqref{problemeLineairestar}; next, using standard quadrature methods,  we compute the mass $m_{v_{\rm num}}(t)$ of the numerical approximation $v_{\rm num}(t,x)$; last, we use the relation \eqref{uegalv} to obtain $u_{\rm num}(t,x)$. The results are plotted for different times in Figure \ref{fig:evolution-gauss-centree} and Figure \ref{fig:evolution-gauss-excentree}. In Figure \ref{fig:evolution-gauss-centree}, we use the Gaussian initial condition $u_0(x)=\tfrac{1}{\sqrt{\pi }}{e^{-x^2}}$. As for Figure \ref{fig:evolution-gauss-excentree}, we use
\[
u_0(x)=\frac{1}{\sqrt \pi}\frac{e^{-(x-4)^{2}}+\varepsilon e^{-x^{2}}}{1+\varepsilon}, \quad \varepsilon=10^{-2},
\]
the role of $\varepsilon$ being to avoid some numerical instabilities. The initial condition lies on the right of the two wells. The solution remains not symmetric but, gradually, it converges to the symmetric ground state, with some possible transient complicated patterns.

\begin{figure}[hbt!]
\begin{center}
\includegraphics[width=12.5cm]{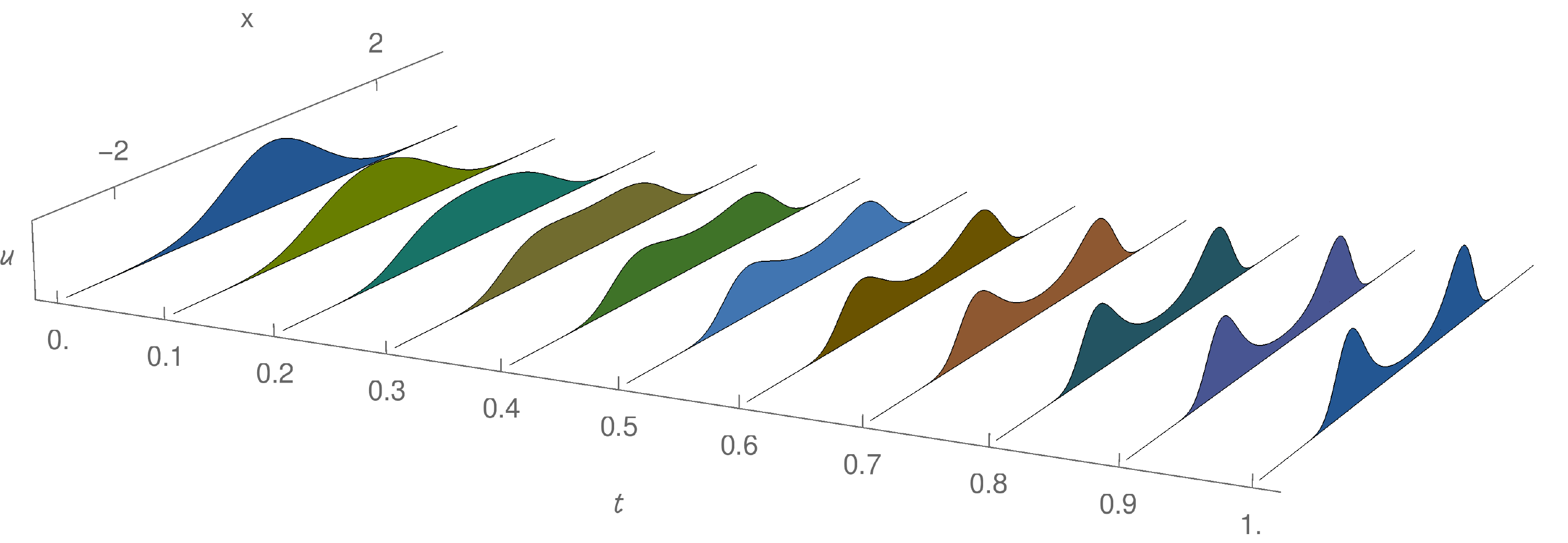}
\caption{Numerical solution of the Cauchy problem \eqref{eq} exhibiting  a branching phenomenon. Here $\sigma=10^{-3}$, fitness is as in \eqref{fitness-evolution} and the initial data is centered.} 
\label{fig:evolution-gauss-centree}
\end{center}
\end{figure}

\begin{figure}[hbt!]
\begin{center}
\includegraphics[width=12.5cm]{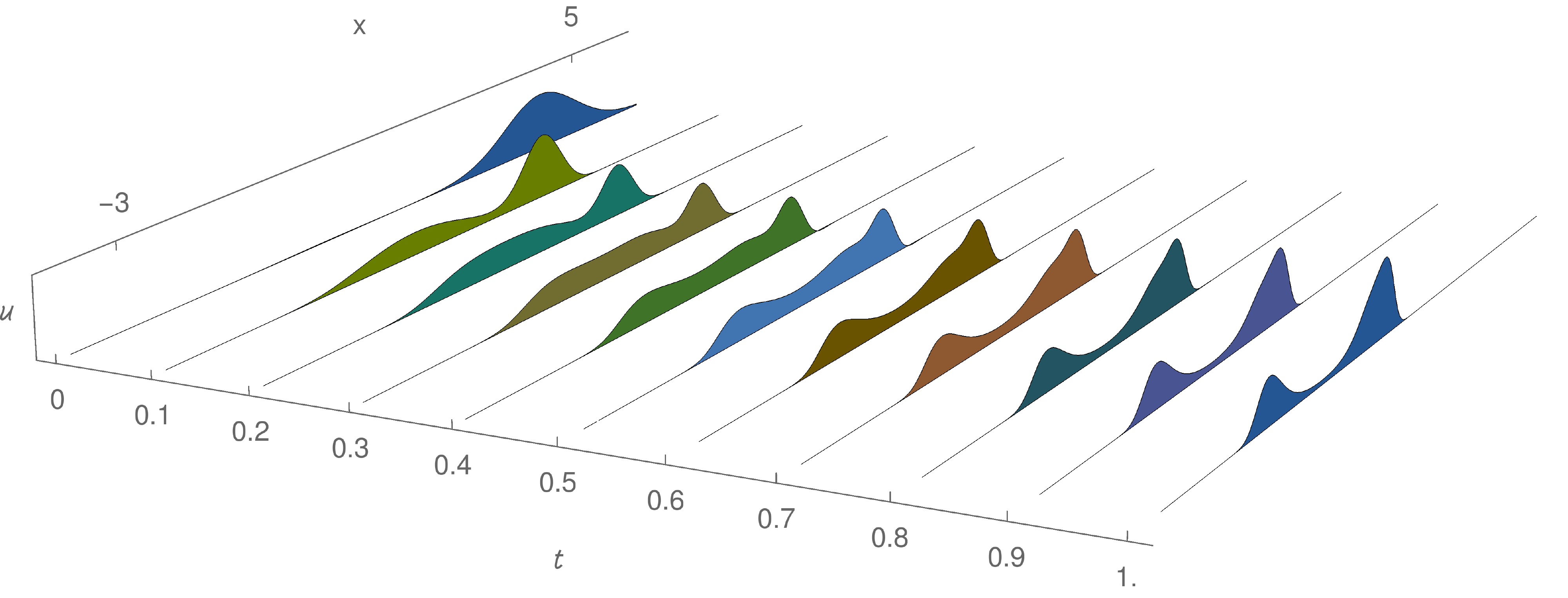}
\caption{Numerical solution of the Cauchy problem \eqref{eq} exhibiting  a branching phenomenon. Here $\sigma=10^{-3}$, fitness is as in \eqref{fitness-evolution} and the initial data is off-centered.} 
\label{fig:evolution-gauss-excentree}
\end{center}
\end{figure}

\subsection{The number of modes}\label{ss:complex} When the fitness function (assumed to be symmetric) reaches its global maximum at $N\geq 2$ points, say $x_1<\cdots<x_N$, it is expected \cite{Dji-Duc-Fab-17} that, as $\sigma \to 0$, the ground state concentrates in the $x_i$ points where the biological niche is the widest since, at these points, individuals suffer less when their traits are slightly changed by mutations.  Mathematically this means that $\phi _0$ is $p$-modal where
\[
p:=\#\left\{1\leq i \leq N: \vert \mathcal W'' (x_i)\vert=\min_{1\leq j\leq N}\vert \mathcal W'' (x_j)\vert \right\}.
\]

As a first example, consider the symmetric, triple-well potential 
\begin{equation*}
\label{potentielGordaEnMedio}
-\mathcal{W}(x)=\tfrac{1}{200} x^4 (6 x-8)^2 (6 x+8)^2,
\end{equation*}
whose wells are localized at $0$ and $\pm\tfrac{4}{3}$. The well at zero is wider than the two other ones. In this case, the ground state is, as explained above,  uni-modal for small $\sigma$. Moreover in this ``narrow-wide-narrow'' situation, the ground state remains uni-modal when we increase $\sigma$, as  numerically observed in Figure \ref{fig:pic-fGf}.
\begin{figure}
\begin{flushleft}
\includegraphics[width=12.5cm]{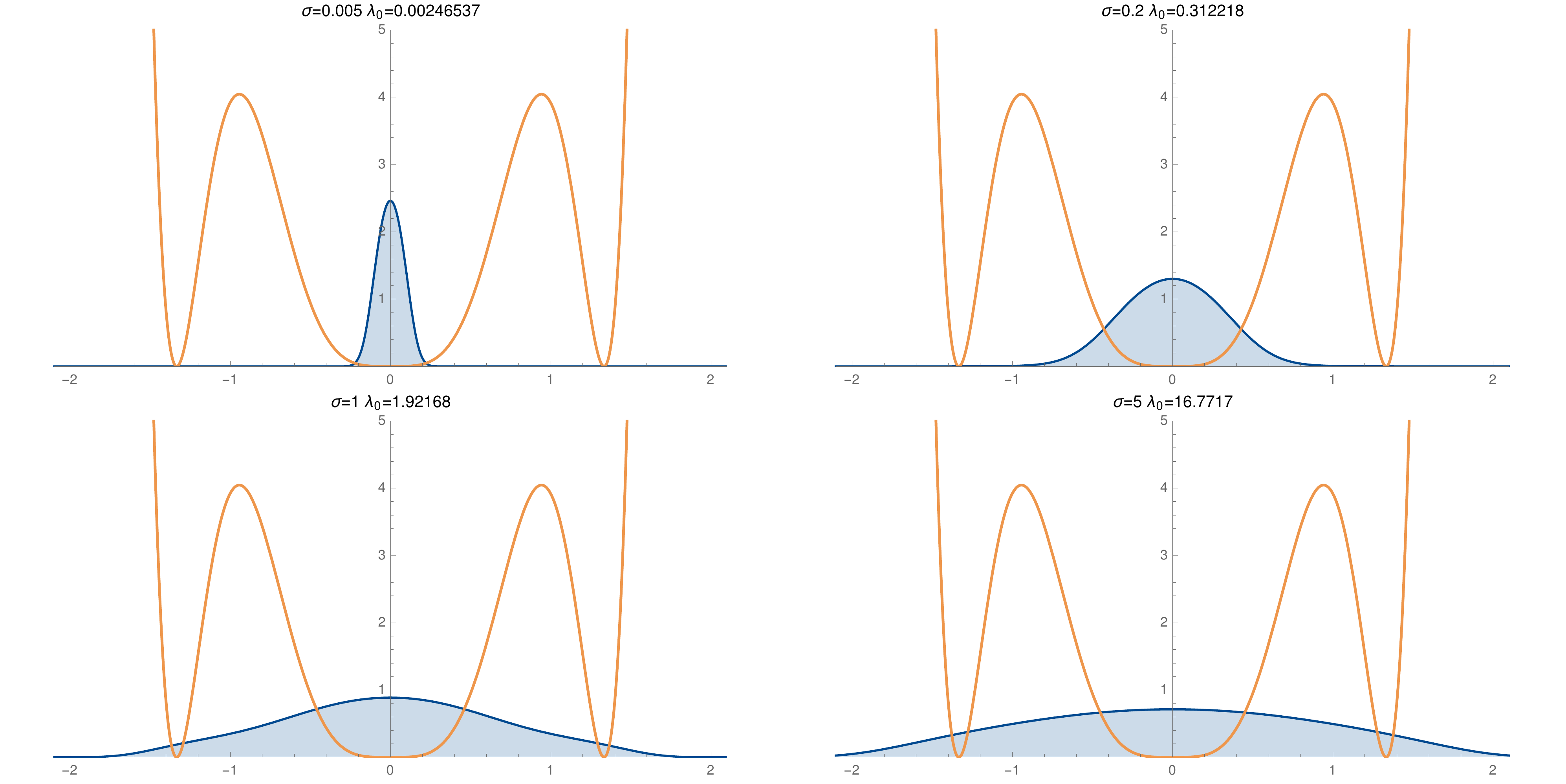}
\caption{Ground state $\phi_0(\sigma)$, increasing parameter $\sigma$, in the case 
of a ``narrow-wide-narrow'' potential.}
\label{fig:pic-fGf}
\end{flushleft}
\end{figure}

On the other hand, as the bifurcation parameter $\sigma$ increases, it may happen that, because of the position of the wells, the number of global maxima of the population distribution varies. Such an example is provided by 
the symmetric, triple-well potential 
\begin{equation}
\label{potentielFlacaEnMedio}
-\mathcal{W}(x)=\tfrac{1}{200}x^2  (x-2)^4 (x+2)^4,
\end{equation}
which is of the ``wide-narrow-wide''  type. We numerically depict in Figure \ref{fig:pic-GfG} the ground state  associated to this fitness function, for different values of the mutation rate. As explained above, the ground state is  bi-modal for small $\sigma$ and uni-modal for large $\sigma$. More interestingly is that, for intermediate values of $\sigma$, the ground state is trimodal. Hence, the combination of the position of the wells of the potential and of the value of the parameter $\sigma$ is of great importance on the number of emerging phenotypes. 

\begin{figure}
\begin{center}
\includegraphics[width=12.5cm]{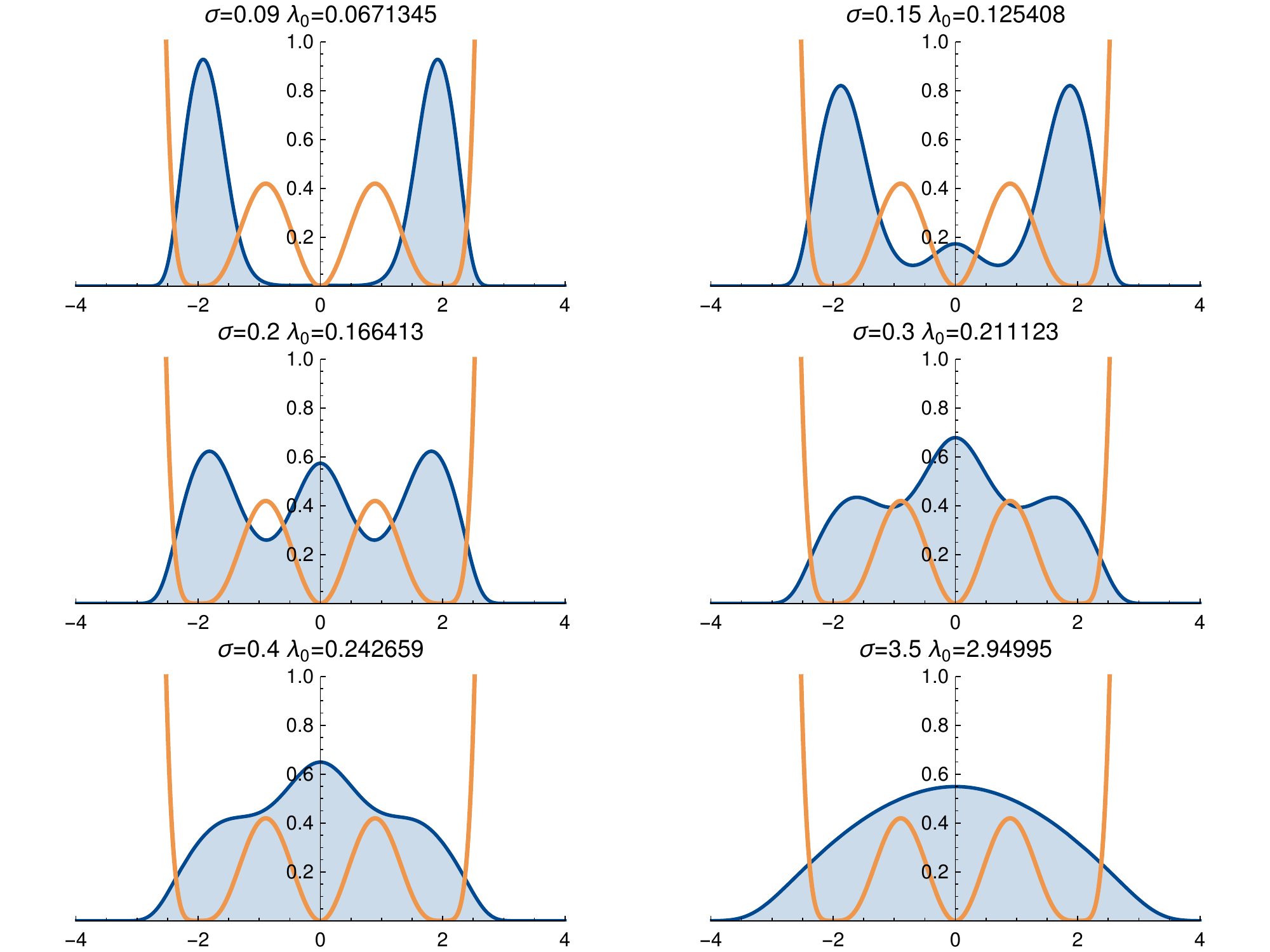}
\caption{Ground state $\phi_0(\sigma)$, increasing parameter $\sigma$, in the case 
of a ``wide-narrow-wide'' potential.}
\label{fig:pic-GfG}
\end{center}
\end{figure}

\section{Discussion}\label{s:discussion}

Our motivation is to understand the so-called branching phenomena, that is the splitting of a population structured by a phenotypic trait from uni-modal to multi-modal distribution.

We consider a population submitted to mutation and selection, thus standing in the framework of the dynamics of adaptation, see \cite{DL96}, \cite{D04}, \cite{DJMP05}, \cite{MPW12}, \cite{Cal-Cud-Des-Rao} among others. The retained model is the replicator-mutator equation, which is a deterministic integro-differential model \cite{PhysRevLett.76.4440}, \cite{Biktashev2014}, \cite{AlfaroCarles14, AlfaroCarles2017}, \cite{Gil-17}. The growth term involves a confining fitness function --- which prevents the possibility of ``escaping to infinity''--- to which the mean fitness is subtracted. Hence, if the initial data is a probability density then so is the solution for later times.

For this model, we have shown the following new mathematical results: the associated Cauchy problem is well-posed and the solution is written explicitly thanks to some underlying Schrödinger eigenelements. This requires the reduction to a linear equation via a change of unknown, the use of Lions' theorem and the derivation of rather non-standard estimates on the eigenelements. As a consequence of the expression of the solution, we deduce that the long time behaviour is determined by the principal eigenfunction or ground state.

Hence, the issue of branching reduces to the issue of the shape of the ground state. In a small mutation regime, we have presented sufficient conditions on the fitness function (symmetric, with two global maxima) for the population to split to bi-modality. Also, still in the small mutation and symmetric fitness regime, the widest global maxima of the fitness function are selected, thus revealing the number of emerging phenotypes. Last, we have underlined that  the number of maxima of the ground state and their value are determined by a combination of the fitness  function (symmetric or not, position of the wells) and the mutation parameter: the population density can be concentrated around some intervals of phenotypic trait in different proportions, corresponding to the emergence of well identified phenotypes.
 
The branching phenomena have recently received more attention \cite{WI13}, \cite{MeleardMirrahimi2015}, \cite{Ito-Sas-16}, \cite{Gil-17}, but to the best of our knowledge, this is the first work where it is obtained through the rather simple replicator-mutator equation \eqref{eq}. However, further investigations remain to be performed for a better understanding of the interplay between the fitness function and the mutation parameter, as sketched in Section \ref{s:branching}. Another relevant information for biological purposes would be an estimate of the time needed for a uni-modal population to branch.

\subsection*{Acknowledgements}
 The authors are  grateful to R\'emi Carles for suggesting Lemma \ref{lem:normes} and continuous encouragement, and to Bernard Helf{}fer for invaluable comments and essential remarks. They also thank Alexandre Eremenko and  Christian Remling for very valuable  discussions. Mario Veruete is grateful for the support of the National Council for Science and Technology of Mexico.


\bibliographystyle{siam}
\bibliography{mabiblio}

\end{document}